\definecolor{forestgreen}{rgb}{0.0, 0.27, 0.13}
\theoremstyle{plain}%
\newtheorem{main}{Theorem}%
\newtheorem{theorem}{Theorem}[section]
\newtheorem{lemma}[theorem]{Lemma}
\newtheorem{corollary}[theorem]{Corollary}
\newtheorem{proposition}[theorem]{Proposition}
\newtheorem{claim}[theorem]{Claim}
\newtheorem{definition}[theorem]{Definition}
\newtheorem{remark}[theorem]{Remark}
\def\bN{\mathbb{N}}
\def\bR{\mathbb{R}}
\def\bX{\mathbb{X}}
\def\bT{\mathbb{T}}
\def\cA{\mathcal{A}}
\def\cD{\mathcal{D}}
\def\cF{\mathcal{F}}
\def\cG{\mathcal{G}}
\def\cM{\mathcal{M}}
\def\cP{\mathcal{P}}
\def\cS{\mathcal{S}}
\def\cU{\mathcal{U}}
\def\cV{\mathcal{V}}
\def\Diff{{\rm Diff}}
\def\dim{{\rm dim}}
\numberwithin{equation}{section}
\DeclareMathSymbol{\varnothing}{\mathord}{AMSb}{"3F}
\title[Robustness of  equilibrium states]{Robustness and uniqueness of equilibrium states for certain partially hyperbolic systems}
\author[J. Mongez]{Juan Carlos Mongez}
\address{Instituto de Matem\'atica, Universidade Federal do Rio de Janeiro, Cidade Universit\'aria - Ilha do Fund\~ao, Rio de Janeiro 21945-909,  Brazil}
\email{jmongez@im.ufrj.br}
\author[M. Pacifico]{Maria Jose Pacifico}
\address{Instituto de Matem\'atica, Universidade Federal do Rio de Janeiro, Cidade Universit\'aria - Ilha do Fund\~ao, Rio de Janeiro 21945-909,  Brazil}
\email{pacifico@im.ufrj.br}
\begin{document}

\begin{abstract}
We prove  that if $f$ is a $C^{1+}$ partially hyperbolic diffeomorphism satisfying certain conditions then there is a
$C^1$-open neighborhood  $\cA$ of $f$ so that every $g\in \cA\cap \operatorname{Diff}^{1+}(M)$ has  a unique equilibrium state. 
\end{abstract}

\thanks{MJP and JCM were partially supported by CAPES-Finance Code 001. 
JCM was partially supported by FAPERJ Grant (Bolsa Nota 10) No. E-26/202.301/2022(276542).
MJP was partially supported by FAPERJ Grant CNE No. E-26/202.850/2018(239069), Grant CNPq-Brazil No. 307776/2019-0 and PRONEX Dynamical Systems E-26/010.001252/2016.}
\footnote{Corresponding author: M. J. Pacifico}
\maketitle
\tableofcontents

\section{Introduction}
 In the context of dynamical systems, for  continuous  maps $f$ defined in  compact metric spaces, the topological entropy $h_{top}(f)$ provides a measure of the complexity of a system by quantifying the growth rate of the number of distinct distinguishable orbits. 
 The pressure $P(f, \phi)$ is a weighted version of the topological entropy $h_{top}(f)$, where the weights are determined by a continuous real function $\phi$, called a potential.
 Then, if $\phi$ is identically zero the topological pressure coincides with the topological entropy.
While  topological entropy quantifies the complexity of a system,
the pressure for a potential describes the balance between the energy and the entropy of the system and can be used to predict its thermodynamical properties. 
The topological pressure is connected to the thermodynamical formalism through the Variational Principle, which relates the topological entropy to the metric entropy of the system,
which measures the complexity of the system from the measure theoretical point of view.
In other words, the Variational Principle provides a way to calculate the topological entropy of a dynamical system by considering its invariant measures and their corresponding entropies.

 To be more precise, denoted by $\cM_f$ the set of $f$-invariant probability measures, the variational principle  establishes   the following  relation 
\begin{equation}
P(f,\phi) = \sup\{h_\mu(f) + \int \phi d\mu : \mu \in \cM_f\}
\end{equation}
An invariant probability $\mu$ is  an {\em equilibrium state} (for short, EE) for the potential $\phi$ if it achieves the supremum in the above equation, that is, if $h_\mu(f) + \int \phi d\mu = P(f,\phi)$.

In particular, since  the topological  pressure coincides with the topological entropy when
the potential $\phi$ is identically zero, the variational principle establishes 
\begin{equation}\label{e-variacional-entropia}
h_{top}(f) = \sup\{h_\mu(f): \mu \in \cM_f\}.
\end{equation}
We say that a measure $\mu$ achieving the supremum in equation (\ref{e-variacional-entropia}) is a {\em measure of maximal entropy} (for short, MME ). 

Maximal entropy  measures  and equilibrium states are key concepts in formal thermodynamical and dynamical systems theory since they provide a way to quantify the degree of chaos in a system  and predict its long-term behavior. 
Thus, measures of maximum entropy and equilibrium states are essential tools to understand the behavior of complex systems and predict their evolution over time.
In particular, a maximal entropy measure has a deep connection with statistical physics and stochastic processes (such as Markov chains) and is closely related to various geometric properties such as the growth rate of closed orbits, as observed by Margulis in his pioneer work \cite{mar70}.

The applicability of these tools to deduce the asymptotic behavior of the system depends on
the existing number of such measures. The optimal case is when there is only one. 
 It is not difficult to construct examples of partially hyperbolic systems with positive and finite entropy which have an infinite number of measures of maximal entropy.
 In these cases, studying the system through its measures of maximal entropy becomes much more difficult.
 Therefore, determining whether a system has a unique measure of maximum entropy has been one of the challenges in dynamical systems theory. 

The study of equilibrium states was initiated by Sinai, Ruelle and Bowen in the 1970s. Sinai was a pioneer in investigating the existence and finiteness of equilibrium states for Anosov diffeomorphisms for continuous H\"older potentials \cite{sinai72}.
Later, Ruelle and Bowen  expanded this approach to include uniformly hyperbolic (Axiom A)  systems \cite{Bowen71, Ruelle68, Ruelle78}.

Bowen,  in \cite{Bowen74} provides a criterion to determine whether a dynamical system has a unique equilibrium state for a given potential function:
for homeomorphisms $f$ defined on a compact metric space $\bX$, if $(\bX,f)$ is an expansive system with specification and the potential satisfies a certain regularity condition (known as the Bowen property) then the system has a unique equilibrium state.  
This criterion is easily applied to Axiom A diffeomorphisms 
 but it is not always applicable to partially hyperbolic systems \cite{SNVPY2016}.

If an Anosov diffeomorphism has a unique measure of maximum entropy, then it is possible to prove that diffeomorphisms close to it also have a unique measure of maximum entropy. However, this is not always possible for partially hyperbolic  systems, for instance,  for a skew product of an Anosov by an irrational rotation.

Climenhaga and Thompson improved the Bowen criterion in \cite{climThom2016} and showed that the same conclusion holds  using weaker non-uniform versions of specification, expansivity, and Bowen property over a significant set of orbit segments.
This criterion applies to certain  systems like the Bonatti-Viana family of diffeomorphisms in \cite{CFT2018} and Ma\~n\'e's derived from Anosov diffeomorphism on $\bT^3$
 in \cite{CFT2019}.
 
This criterion does not apply to flows with singularities, as it happens to Lorenz-type attractors.
To address this issue, Pacifico et al. developed an improved version of the CT criterion  \cite{PYY2022}, and using it they were able to prove that Lorenz-like attractors in any dimension have a unique maximum entropy measure \cite{PYYLorenz2022}.

Determining which partially hyperbolic systems have a unique measure of maximal entropy is a challenging task. However, using their criterion, Climenhaga and Thompson 
following the approach in  \cite{CFT2019}, were able to prove that partially hyperbolic systems with central dimension one and such that every MME has 
 central Lyapunov exponent negative and  the unstable foliation is minimal
have a unique measure of maximal entropy \cite{climThom2021}. 
Since the minimality of the unstable foliation and, more importantly, the assumption about the central Lyapunov exponent are not open conditions,  
the robustness of the uniqueness of the maximum entropy measure does not follow directly from this work.

We note that while the minimality condition implies $\varepsilon$-minimality (see Definition \ref{defi-epsilon-minimality}), allowing the application of \cite{PYY2022} to conclude that the uniqueness of the equilibrium state is robust, the assumption about the central Lyapunov exponent cannot be easily replaced by another condition that would yield the desired result. To overcome this issue, we explore the properties of unstable and stable entropy, denoted as $h^u(f)$ and $h^s(f)$, respectively, as introduced in \cite{yang2016, HHHY2017} (see Definition  \ref{def-u-s-entropia}). By replacing the aforementioned assumption about the central Lyapunov exponent with the condition $h^u(f)-h^s(f)>0$, we were able to prove the following result:

\begin{main}\label{teo-main}
Let $f:M\to M$ be a $C^{1+}$ partially hyperbolic diffeomorphism of a compact manifold $M$ with $TM=E^u \oplus E^c \oplus E^s $ and $\phi:M\to \mathbb{R}$ a H\"older continuous potential. Assume that $dim E^c=1$ and the unstable foliation $\mathcal{F}^u(f)$ 
is minimal.
If $h^u(f)-h^s(f)>\sup \phi-\inf \phi\geq 0$ then there exists a $C^1$ neighborhood  $\mathcal{U}$ of $f$ so that  $(g,\phi)$ has a unique equilibrium state for every $C^{1+}$ diffeomorphism $g\in \mathcal{U}$.
\end{main}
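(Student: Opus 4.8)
\emph{Proof strategy.}
The plan is to realize the hypotheses of the Climenhaga--Thompson criterion, in the refined form of \cite{PYY2022}, uniformly over a $C^1$-neighborhood $\cU$ of $f$: for each $g\in\cU\cap\Diff^{1+}(M)$ one wants a $[\cP,\cG,\cS]$-decomposition of the space of orbit segments such that $\cG$ has the specification property, $\phi$ has the Bowen property on $\cG$, and $P(\cP\cup\cS,\phi)<P(g,\phi)$; the abstract theorem then yields a unique equilibrium state for $(g,\phi)$. So the work splits into (i) propagating the structural hypotheses on $f$ to a neighborhood, and (ii) constructing the decomposition together with the strict pressure gap, with all constants uniform over $\cU$.

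For (i): partial hyperbolicity with $\dim E^c=1$ is a $C^1$-open condition, so on a first neighborhood of $f$ every $g$ carries a dominated splitting $TM=E^u_g\oplus E^c_g\oplus E^s_g$ with uniform cone fields, uniform hyperbolicity rates, and bundles depending continuously on $g$. Since $\cF^u(f)$ is minimal it is $\varepsilon$-minimal for every $\varepsilon>0$ (Definition~\ref{defi-epsilon-minimality}), and $\varepsilon$-minimality of the unstable foliation is $C^1$-robust (cf. \cite{PYY2022}); hence one fixes $\varepsilon_0>0$ and shrinks so that $\cF^u(g)$ is $\varepsilon_0$-minimal with a density radius independent of $g$. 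Finally I would use that $h^u(g)$ and $h^s(g)$, being governed by the unstable and stable volume-growth rates and hence by the continuously varying bundles and derivative, depend continuously on $g$ in the $C^1$ topology; thus the open inequality $h^u(g)-h^s(g)>\sup\phi-\inf\phi$ persists on a neighborhood $\cU$, with a uniform gap $\gamma_0\eqdef\inf_{g\in\cU}\big(h^u(g)-h^s(g)-\sup\phi+\inf\phi\big)>0$.

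For (ii): fix $g\in\cU\cap\Diff^{1+}(M)$ and reuse the center-exponent decomposition of the Ma\~n\'e examples from \cite{CFT2019,climThom2021}. Choosing a threshold that depends only on $\gamma_0$ and the uniform partial-hyperbolicity data, let $\cG$ consist of the orbit segments $(x,n)$ along which the Birkhoff sums of $\log\|Dg|_{E^c_g}\|$ stay below that threshold at every intermediate time, and let $\cP,\cS$ record the maximal ``bad'' prefix and suffix, so that every orbit segment decomposes as $\cP\,\cG\,\cS$ (as in \cite{CFT2019}). Along $\cG$ the center is uniformly contracted and the stable and unstable directions are uniformly hyperbolic, which gives bounded distortion on these segments; combined with $\varepsilon_0$-minimality of $\cF^u(g)$ (which lets one connect the $g^n$-image of a local unstable disc to the next segment's unstable disc within a uniformly bounded gap), this yields the specification property for $\cG$, and the Bowen property of the H\"older potential $\phi$ on $\cG$ follows from the bounded distortion; every constant depends only on the uniform data of $\cU$ and the fixed H\"older exponent and constant of $\phi$. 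For the pressure gap, by the general bound relating the pressure of a collection of orbit segments to the weak-$*$ limits of its empirical measures (see \cite{climThom2016,PYY2022}), $P(\cP\cup\cS,\phi)\le\sup\{h_\mu(g)+\int\phi\,d\mu\}$ over the invariant $\mu$ arising as such limits along $\cP\cup\cS$, and by construction these $\mu$ satisfy $\lambda^c_\mu(g)\ge0$. For any invariant $\mu$ with $\lambda^c_\mu(g)\ge0$, the Ledrappier--Young entropy formula for $g^{-1}$ (whose center exponent is then $\le0$) shows that its entropy is generated on unstable leaves: $h_\mu(g)=h_\mu(g^{-1})=h^u_\mu(g^{-1})=h^s_\mu(g)\le h^s(g)$, where $h^u_\mu(g^{-1})=h^s_\mu(g)$ is the definition of the stable entropy and $h^s_\mu(g)\le h^s(g)$ is the variational principle for stable entropy of \cite{yang2016,HHHY2017}. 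Hence $P(\cP\cup\cS,\phi)\le h^s(g)+\sup\phi<h^u(g)+\inf\phi\le h_{top}(g)+\inf\phi\le P(g,\phi)$, the strict step being precisely $\gamma_0>0$. With the three hypotheses verified uniformly, \cite{PYY2022} gives the unique equilibrium state.

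The hard part will be step (ii). First, one must choose the center threshold and then verify the specification and Bowen properties for $\cG$ with constants that are genuinely uniform over the whole neighborhood $\cU$, not merely for $f$: this forces careful bookkeeping of how each constant depends only on the uniform partial-hyperbolicity data, the uniform $\varepsilon_0$-density radius, and the fixed H\"older data of $\phi$. Second, at the level of orbit segments one must confirm that the weak-$*$ limits of empirical measures carried by $\cP$ and $\cS$ indeed have non-negative center exponent and that the entropy formula applies to them, so that the measure-theoretic bound $h_\mu(g)\le h^s(g)$ upgrades to the strict pressure inequality $P(\cP\cup\cS,\phi)<P(g,\phi)$. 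By comparison, the robustness established in step (i) is relatively soft, its only substantial input being the continuity of $h^u$ and $h^s$ in the $C^1$ topology.
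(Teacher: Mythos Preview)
Your overall strategy matches the paper's: both apply the refined Climenhaga--Thompson criterion of \cite{PYY2022} with a center-exponent decomposition, derive specification on $\cG$ from $\varepsilon$-minimality of $\cF^u$, obtain the Bowen property from H\"older regularity plus center contraction along $\cG$, and get the pressure gap from a stable-entropy bound. However, two steps in your proposal are genuinely incomplete.

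First, the assertion that $h^u(g)$ and $h^s(g)$ vary continuously in the $C^1$ topology is not justified by the heuristic ``governed by volume-growth rates on continuously varying bundles''; upper semicontinuity is known \cite{yang2016}, but lower semicontinuity is not automatic and, as far as I know, not available in general. The paper does \emph{not} claim this continuity. Instead it proves (Lemma~\ref{unstable-pressure-is-continues}) that $g\mapsto P^u(g,\phi)$ is continuous \emph{at $f$} under the gap hypothesis: since every ergodic equilibrium state for $f$ is hyperbolic with positive entropy (Claim~\ref{claim1}), one approximates $P^u(f,\phi)=P(f,\phi)$ by a Katok horseshoe, which then persists for nearby $g$ by structural stability. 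This is the substantive content of Section~\ref{s-consequences}, and calling it ``relatively soft'' underestimates the work involved.

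Second, there is an internal inconsistency between your choice of $\cG$ and your pressure-gap argument. To obtain uniform center contraction along segments of $\cG$ (needed for the fake $cs$-disks of Theorem~\ref{Manifolds for hyperbolic times}, hence for both specification and the Bowen estimate), the threshold on the Birkhoff averages of $\log\|Dg|_{E^c}\|$ must be some $-r<0$. But then the segments in $\cP\cup\cS$ produce empirical measures whose weak-$*$ limits satisfy only $\lambda^c_\mu(g)\ge -r$, not $\lambda^c_\mu(g)\ge 0$; your Ledrappier--Young step therefore gives at best $h_\mu(g)\le h^s(g)+r$, and the chain $h^s(g)+\sup\phi<h^u(g)+\inf\phi\le P(g,\phi)$ no longer closes the gap. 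The paper resolves this (Lemma~\ref{bad-UH-small-pressure}, Corollary~\ref{existence of a and V}) by a compactness/continuity argument: using precisely the continuity of $P^u$ established above, it produces $r>0$, $a>0$, and a neighborhood $\cV$ with $\sup\{P_\mu(g,\phi):\lambda^c(\mu,g)\ge -r\}<a<P(g,\phi)$ uniformly for $g\in\cV\cap\Diff^{1+}(M)$. Thus the two gaps are linked: the continuity of $P^u$ is exactly what makes the uniform pressure gap with a \emph{negative} threshold work.

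A smaller omission: you do not verify the hypothesis $P^\perp_{\exp}(g,\phi,\varepsilon)<P(g,\phi)$ of Theorem~\ref{criterioCT}. The paper handles this in Proposition~\ref{obstrucao-de-expansividade-pequena} via \cite{LVY13}: for partially hyperbolic diffeomorphisms with one-dimensional center, ergodic measures with nonzero center exponent are almost expansive, so the obstruction pressure is bounded by the pressure over measures with $\lambda^c=0$, which is already controlled by \eqref{hPP<Pto}.
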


\subsubsection*{Organization of the paper} 
In Section \ref{s-preliminar} we provide the readers with preliminaries on a criterion for uniqueness of measures of maximal entropy, set the notations and definitions, which are taken from \cite{climThom2016, PYY2022}. In particular, in this section, we define stable $h^s(f)$ and unstable $h^u(f)$ entropy for partially hyperbolic systems, taken from \cite{yang2016, HHHY2017}. In Section \ref{s-consequences} we establish the consequences  of
 $h^u(f) - h^s(f) > 0$. 
In Section \ref{s-prova-teoA} we provide the proof of Theorem \ref{teo-main} showing
   that every $g$ in a neighborhood $C^{1+}$ of a partially hyperbolic diffeomorphism $f$ with 
   a one-dimensional central bundle, such that the unstable foliation $\mathcal{F}^u$ is minimal and such that $h^ u(f ) - h^s(f) > 0$ satisfies all the requirements of Theorem \ref{criterioCT}, finishing the proof.
   Finally, in Section \ref{s-example} we show that the classical class of examples of a non-hyperbolic, robustly transitive diffeomorphism, isotopic to an Anosov automorphism and derived from an Anosov system, as constructed by Ma\~n\'e in \cite{M78}, satisfies the hypotheses of our theorem.

\section{Preliminaries}\label{s-preliminar}
\subsection{ A criterion to the uniqueness of MME }\label{ss-PYY}

In this section, we provide a criterion developed in \cite{PYY2022} to obtain the uniqueness of
MME. 
 For the convenience of those who are familiar with \cite{climThom2016} or \cite{PYY2022}, the notations here are the same. So it is safe to skip most of this Section and move on to section \ref{ss-PH}. 

\subsubsection{Topological pressure}\label{ss-Top-pressure}
Let $M$ be a compact metric space and $f:M\to M$ a homeomorphism. The nth Bowen metric associated with $f$ is defined by
\begin{equation*}
    d_n(x,y):=\sup\{d(f^k(x),f^k(y))|0\leq k<n\}.
\end{equation*}

The $n$-Bowen ball with radius $\varepsilon>0$ centred at $x\in M$ is given by
\begin{equation*}
B_n(x,\delta):=\{y\in M:d_n(x,y)<\delta\}.  
\end{equation*}

Given $\delta> 0, \, n\in\mathbb{N},$ a set $E\subset M$
 is $(n,\delta)$-separated if for every pair of distinct  points $x,y\in M$ it holds $d_n(x,y)>\delta$.

Given a continuous potential $\phi:M\to \mathbb{R}$, write $\Phi_\varepsilon(x,n) =\sup_{y\in B_n(x,\varepsilon)}\sum_{k=0}^{n-1}\phi(f^k(y))$. In particular, $\Phi_0(x,n)=\sum_{k=0}^{n-1}\phi(f^k(y))$.

We identify $M \times \mathbb{N}$ with the space of finite orbit segments by identifying $(x, n)$ with $\{x, f(x),\cdots, f^{n-1}(x) \}$.
Given $\mathcal{C}\subset M\times \mathbb{N}$ and $n\in \mathbb{N}$ we write $\mathcal{C}_n:=\{x\in M: (x,n)\in \mathcal{C}\}$.  
Fixed $\varepsilon,\delta>0$ and $n\in \mathbb{N}$ we consider the {\em partition function}
$$
\Lambda(\mathcal{C},f, \phi,\delta, \varepsilon,n ):=\sup \left\{\sum_{x \in E} e^{\Phi_{\epsilon}(x,n)}: E \subset \mathcal{C}_n \text { is }(n, \delta) \text {-separated }\right\},$$
and when $\varepsilon=0$ we write   $\Lambda(\mathcal{C},f,\phi,\delta,n)$ instead of $\Lambda(\mathcal{C},f,\phi,\delta,0,n)$. 
Note that $\Lambda$ is monotonic in both
$\delta$ and $\varepsilon$, although in different directions: if $\delta_1 <\delta_2$ and  $\varepsilon_1<\varepsilon_2$ then

$$\Lambda(\mathcal{C},f, \phi, \delta_1, \varepsilon, t) \geq \Lambda(\mathcal{C}, f, \phi, \delta_2, \varepsilon, t) 
\text{ and }\Lambda(\mathcal{C}, f, \phi, \delta, \varepsilon_1, t) \leq \Lambda(\mathcal{C}, f,\phi, \delta, \varepsilon_2, t).$$

The pressure of $\phi$ on $\mathcal{C}$ at scale $\delta,\varepsilon$  is
$$
P(\mathcal{C},f, \phi, \delta, \varepsilon):=\limsup_{n \to \infty} \frac{1}{n} \log \Lambda_n(\mathcal{C},f, \phi,\delta, \varepsilon).
$$
Note also that the monotonicity of $\Lambda$ is naturally translated to $P$.
When $\varepsilon=0$ we simplify the notation and write $P(\mathcal{C},f,\phi,\delta)$ instead 
of $P(\mathcal{C},f,\phi,\delta,0)$ and the pressure of $\phi$ on $\mathcal{C}$ is
$$
P(\mathcal{C},f, \phi):=\lim_{\delta \to 0} P(\mathcal{C},f, \phi, \delta) .
$$
Given $C \subset M$, we define $P(C,f, \phi,\delta, \varepsilon):=P(C \times \mathbb{N}, f,\phi,\delta, \varepsilon)$; observe that $P(C, f, \phi )$ agrees with the usual notion of topological pressure in $C$. When $\phi=0$, we have the topology entropy of $\mathcal{C}$, that is,
$$
h_{top}(\mathcal{C},f, \delta ):=P(\mathcal{C},f,0, \delta) \quad \text {and} \quad h_{top}(\mathcal{C},f)=\lim_{\delta \to 0} h(\mathcal{C}, {f}, \delta) 
.$$
Finally, we define
$$P(f,\phi):=P(M,f,\phi) \quad\text{and}\quad h_{top}(f):=P(M,f),$$
which are the usual notions of pressure and entropy.

Write $\cM(M)$ for the set of the Borel probability measures of $M$, $\cM_f$ the set of $f$-invariant Borel probability measures and $\cM^e_f$ for the set of ergodic measures in $\cM_f$. The variational principle states that
$$
P(f,\phi)=\sup_{\mu \in \mathcal{M}_f}\left\{h_\mu(f)+\int \phi d \mu\right\}=\sup_{\mu \in \mathcal{M}_f^e}\left\{h_\mu(f)+\int \phi d \mu\right\} .
$$

A measure achieving the supremum is said an equilibrium state (abbrev. EE). When the potential is zero, the measure equilibrium state is called a measure of maximal entropy (abbrev. MME). Additionally, for $\mu \in \mathcal{M}_f$ we define 

$$P_\mu(f,\phi):=h_\mu(f)+\int\phi d\mu.$$

\subsubsection{Obstruction to expansivity}

We start defining the bi-infinite Bowen ball around $x \in {M}$ of size $\varepsilon>0$ as the set
$$
\Gamma_{\varepsilon}(x):=\left\{y \in M: d\left(f^k x, f^k y\right)<\varepsilon \text { for all } n \in \mathbb{Z}\right\} .
$$
If there exists $\varepsilon>0$ for which $\Gamma_{\varepsilon}(x)=\{x\}$ for all $x \in M$, we say that $f$ is expansive. 

When there exists $\varepsilon$ such that $h_{top}(\Gamma_\varepsilon(x))=0$ for every $x\in M$ we say that $f$ is entropy expansive, $h-$expansive for short.  

The set of non-expansive points at scale $\varepsilon$ is 
$$\operatorname{NE}(\varepsilon):=\left\{x \in X: \Gamma_{\varepsilon}(x) \neq\{x\}\right\}.$$ 
An $f$-invariant measure $\mu$ is almost expansive at scale $\varepsilon$ if $\mu(\mathrm{NE}(\varepsilon))=0$. 

\begin{definition}\label{def-obs-pressure}
    
Given a potential $\phi$, the pressure of obstructions to expansivity at scale $\varepsilon$ is
$$
\begin{aligned}
P_{\exp }^{\perp}(f,\phi, \varepsilon) & =\sup_{\mu \in \mathcal{M}^e(f)}\left\{h_\mu(f)+\int \phi d \mu: \mu(\mathrm{NE}(\varepsilon))>0\right\} \\
& =\sup_{\mu \in \mathcal{M}^e(f)}\left\{h_\mu(f)+\int \phi d \mu: \mu(\mathrm{NE}(\varepsilon))=1\right\}.
\end{aligned}
$$
\end{definition}

\subsubsection{ Weak specification}
The specification property plays a central role in the
work of Bowen \cite{Bowen71} and Climenhaga-Thompson \cite{climThom2016}. In \cite{climThom2016} a weak specification definition is introduced for a set of finite orbit segments.\begin{definition}
   We say that $\mathcal{G}\subset M\times \mathbb{N}$ has weak specification at scale $\delta>0$ if there exists $\tau\in \mathbb{N}$ such that  for every $(x_i,n_i)_{i=1}^k \subset \mathcal{G}$ there exists a point $y$ and a sequence of \textquotedblleft glueing times" $\tau_1,\cdots, \tau_{k-1}\subset \mathbb{N}$ with $\tau_i\leq \tau$ such that writing  $N_j=\sum_{i=1}^{j}n_i+\sum_{i=1}^{j-1}\tau_i$, and $N_0=\tau_0=0$ we have

    \begin{equation*}
        d_{n_j}(f^{n_{j-1}+\tau_{j-1}}(y),x_j)<\delta \mbox{ for every }1\leq j\leq k 
    \end{equation*}.
 \end{definition}

 The specification property implies the existence of a point $y$ whose orbit closely shadows the orbit of $x_1$ for a certain period time $n_1$. It then transitions to shadow the orbit of $x_2$ for another period  time $n_2$, with bounded gaps no greater than $\tau$ between each transition.
 
\subsubsection{Bowen's property}
The bounded distortion property also referred to as Bowen's property, was initially introduced by Bowen in  \cite{Bowen71}.  

\begin{definition}
Given $\mathcal{G} \subset M \times \mathbb{N}$, a potential $\phi$ has the Bowen property on $\mathcal{G}$ at scale $\varepsilon>0$ if there exists $K>0$ so that

$$\sup \left\{\left|\Phi_0(x,n)-\Phi_0(y,n)\right|:(x, n) \in \mathcal{G}, y \in B_n(x, \varepsilon)\right\} \leq K.$$
\end{definition}

\subsubsection{Dynamic Decompositions}\label{decompositions}
The most important observation in \cite{climThom2016} is that unique equilibrium state can be obtained if the specification and Bowen property established in \cite{Bowen74} are satisfied only on a significant set of orbit segments rather than  the whole variety.

\begin{definition}
A decomposition for $(M, f)$ consists of three collections $\mathcal{P}, \mathcal{G}, \mathcal{S} \subset M \times(\mathbb{N} \cup\{0\})$ and three functions $\hat{p}, \hat{g}, \hat{s}: M \times \mathbb{N} \to \mathbb{N} \cup\{0\}$ such that for every $(x, n) \in$ $M\times \mathbb{N}$, the values $\hat{p}=\hat{p}(x, n), \hat{g}=\hat{g}(x, n)$, and $\hat{s}=\hat{s}(x, n)$ satisfy $n=\hat{p}+\hat{g}+
\hat{s}$, and
$$
(x, \hat{p}) \in \mathcal{P}, \quad\left(f^{\hat{p}} (x), \hat{g}\right) \in \mathcal{G}, \quad\left(f^{\hat{p}+\hat{g}}(x), \hat{s}\right) \in \mathcal{S} .
$$
\end{definition}

Note that the symbol $(x, 0)$ denotes the empty set, and the functions $\hat{p}, \hat{g}, \hat{s}$ are permitted to take the value zero.

The main result  in \cite{climThom2016} establishes the existence and uniqueness of equilibrium states for systems with the Bowen property, weak specification, and a minor obstruction for expansiveness on a specific set of \textquotedblleft good orbits." 
However, this criterion is not applicable in our specific scenario due to its rigorous assumptions on the specification. To overcome this challenge, we will use the improved of Climenhaga-Thompsons criterion by Pacifico et al established in \cite{PYY2022}.

\begin{theorem}\cite[Theorem A]{PYY2022}\label{criterioCT}
Let $f: M \to M$  \st{Lipschitz} be a homeomorphism defined in a  compact metric space $M$ and $\phi: M \to \bR$  a continuous potential function. Suppose  there are $\varepsilon,\delta$  and $\varepsilon>2000\,\delta$ \st{where $L_f>0$ is a constant that depends of $f$. Suppose} such that $P_{\exp }^{\perp}(f,\phi, \varepsilon)<P(f,\phi)$ and $\cD \subset M\times \bN$ which 
 %that $(M, f)$ 
 admits a decomposition $(\mathcal{P}, \mathcal{G}, \mathcal{S})$ with the following properties:
\begin{enumerate}
    \item $\mathcal{G}$ has (W)-specification at scale $\delta$; 
    \item $\phi$ has the Bowen property at scale $\varepsilon$ on $\mathcal{G}$;
    \item $P(\mathcal{P} \cup \mathcal{S},g,\phi,\delta,\varepsilon)<P(f,\phi)$.
\end{enumerate}

Then there is a unique equilibrium state for $(M,f,\phi)$.
\end{theorem}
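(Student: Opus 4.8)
The plan is to follow the Climenhaga--Thompson scheme \cite{climThom2016} that converts a decomposition into a unique equilibrium state, adjusting the bookkeeping to the weak form of specification (gluing times $\tau_i\le\tau$, rather than a fixed $\tau$) and to the scale gap $\varepsilon>2000\,\delta$. Write $P\eqdef P(f,\phi)$. The first step is to show that the pressure gap of hypothesis (3) is inherited by the good core, $P(\mathcal{G},f,\phi,\delta)=P$, together with the analogous statement at the pair of scales $(\delta,\varepsilon)$. For this, fix $n$, take a near-maximizing $(n,\delta)$-separated subset of $\cD_n$ ($\cD$ being assumed, as in \cite{climThom2016,PYY2022}, to carry the full pressure), and split it according to the triple $(\hat p,\hat g,\hat s)$ produced by the decomposition. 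Since $\hat p+\hat g+\hat s=n$ there are at most $(n+1)^2$ such triples, so it suffices to estimate, for each fixed triple, the weighted sum over orbit segments with that prescribed splitting; a concatenation estimate bounds this by $\Lambda(\mathcal{P},f,\phi,\delta,\hat p)\,\Lambda(\mathcal{G},f,\phi,\delta,\hat g)\,\Lambda(\mathcal{S},f,\phi,\delta,\hat s)$ up to a fixed distortion factor coming from uniform continuity of $\phi$. Because $P(\mathcal{P}\cup\mathcal{S},f,\phi,\delta,\varepsilon)<P$, the $\mathcal{P}$- and $\mathcal{S}$-factors are exponentially beaten unless $\hat p+\hat s=o(n)$, and a polynomial count then forces the $\mathcal{G}$-factor to grow like $e^{nP-o(n)}$, i.e.\ $P(\mathcal{G},f,\phi,\delta)\ge P$; thickening $\mathcal{G}$-segments to Bowen $\varepsilon$-balls, at the cost of another fixed factor, gives the version at scales $(\delta,\varepsilon)$.

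For existence I would run the Misiurewicz-type construction on the core. Using the previous step, for each $n$ pick an $(n,\delta)$-separated set $E_n\subset\mathcal{G}_n$ with $\log\sum_{x\in E_n}e^{\Phi_0(x,n)}\ge nP-o(n)$, form $\nu_n\eqdef(\sum_{x\in E_n}e^{\Phi_0(x,n)})^{-1}\sum_{x\in E_n}e^{\Phi_0(x,n)}\delta_x$ and its time average $\mu_n\eqdef\tfrac1n\sum_{k=0}^{n-1}f^k_\ast\nu_n$, and let $\mu\in\mathcal{M}_f$ be a weak-$\ast$ subsequential limit. The $(n,\delta)$-separation lets a finite partition of mesh $<\delta$ detect the entropy in the usual lower-bound argument, the Bowen property of $\phi$ on $\mathcal{G}$ replaces $\Phi_0(x,n)$ by $n\!\int\!\phi\,d(\text{local empirical average})$ with bounded error, and the hypothesis $P_{\exp}^{\perp}(f,\phi,\varepsilon)<P$ is precisely what guarantees that no pressure escapes onto $\mathrm{NE}(\varepsilon)$ in the limit; together these give $h_\mu(f)+\int\phi\,d\mu\ge P$, so $\mu$ is an equilibrium state. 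Carrying the same construction out with dyadic cut-offs and a diagonal extraction produces, moreover, a reference Borel measure $m$ with a one-sided Gibbs bound $m(B_n(x,\varepsilon))\ge C^{-1}e^{-nP+\Phi_0(x,n)}$ for $(x,n)\in\mathcal{G}$ and the two-sided upper bound $m(B_n(x,\varepsilon))\le C\,e^{-nP+\Phi_0(x,n)}$ on all of $M$ (the latter again by cutting an arbitrary segment into $\mathcal{P}$-prefix, $\mathcal{G}$-core and $\mathcal{S}$-suffix and invoking the first step). This $m$ is the carrier of uniqueness.

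For uniqueness, let $\mu_1,\mu_2$ be ergodic equilibrium states. First, $\mu_i(\mathrm{NE}(\varepsilon))=0$: otherwise ergodicity gives $\mu_i(\mathrm{NE}(\varepsilon))=1$, whence $P=P_{\mu_i}(f,\phi)\le P_{\exp}^{\perp}(f,\phi,\varepsilon)<P$, absurd. Second, a Birkhoff / Shannon--McMillan--Breiman argument using that $\mathcal{P}$ and $\mathcal{S}$ carry sub-maximal pressure shows that for $\mu_i$-a.e.\ $x$ there is a positive-density set of times $n$ along which the decomposition of $(x,n)$ has $\hat p(x,n)+\hat s(x,n)=o(n)$, so $\mu_i$-typical long orbit segments are essentially $\mathcal{G}$-cores. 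Third---the crux---one shows that any ergodic equilibrium state $\mu$ is equivalent to $m$ on a set of full $\mu$-measure: given a $\mu$-typical segment that is mostly a $\mathcal{G}$-core, (W)-specification glues many such cores together (with $\le\tau$ transition times and per-junction $\delta$-shadowing) into large $(n,\delta)$-separated families inside $\mathcal{G}_n$, and the Gibbs bounds for $m$ together with the fact that these cores must carry essentially all the entropy of $\mu$ (since $\mu$ is an equilibrium state) force $\mu$-positive sets and $m$ to have comparable measures; the two-sided Gibbs property and Shannon--McMillan--Breiman then identify $\mu$ with $m$ along a full-measure set. Since $m$ does not depend on $\mu$, ergodicity yields $\mu_1=\mu_2$, and the general, non-ergodic case follows from the ergodic decomposition.

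I expect the absolute-continuity step in the last paragraph to be the main obstacle, since it must be run with (W)-specification rather than ordinary specification: the connecting segments have lengths varying up to $\tau$, so one has to control how the per-junction $\delta$-errors, the passage between Bowen balls at scales $\delta$ and $\varepsilon$, and the attendant distortion of $\Phi_0$ accumulate through a concatenation of many pieces---and this is exactly why the statement asks for $\varepsilon>2000\,\delta$, the constant $2000$ being a crude but comfortable overestimate that keeps every triangle-inequality and distortion estimate within the regime where the Bowen property at scale $\varepsilon$ and the obstruction gap at scale $\varepsilon$ can both be applied. A secondary point worth isolating is that the Bowen property is assumed only on $\mathcal{G}$, so every estimate on an arbitrary orbit segment must first be routed through its canonical $\mathcal{P}\mathcal{G}\mathcal{S}$-splitting, with the prefix and suffix contributions absorbed into the pressure gap (3).
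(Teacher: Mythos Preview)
This theorem is not proved in the present paper: it is quoted verbatim as \cite[Theorem A]{PYY2022} and used as a black box in Section~\ref{s-prova-teoA}. Consequently there is no proof in the paper to compare your proposal against. Your sketch is a reasonable outline of the Climenhaga--Thompson/PYY machinery, but if you want to assess it you must consult \cite{PYY2022} (and \cite{climThom2016}) directly rather than this paper.
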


\subsection{Partial hyperbolic systems}\label{ss-PH}
The concept of  partially hyperbolic systems is a natural generalization  of uniform hyperbolicity, and the research in this area dates back to the early 1970s, see, for instance \cite{HPS77}.

\begin{definition}\label{def-partialh}
     A diffeomorphism $f$ defined on a  Riemannian compact manifold $M$ is  partially hyperbolic (abbrev. PH) if it  admits a non-trivial $Df$- invariant splitting of the tangent bundle $TM=E^s_f\oplus E^c_f\oplus E^u_f$, such that, all unit vectors $v^{\sigma}\in E^\sigma_f(x)$ ($\sigma=s,c,u$) with $x\in M$ satisfy

\begin{equation*}
    \|Df_xv^s\|<\|Df_xv^c\|<\|Df_xv^u\|
\end{equation*}
for some appropriate Riemannian metric; f must also satisfy that $0<\|Df_{|E^s}\|<\xi<1$ and $0<\|Df^{-1}_{|E^u}\|<\xi<1$.

\end{definition}
Throughout this paper, we will work with  partially hyperbolic diffeomorphisms with dimension central one, that is, $\dim\, (E^c)=1$. 

\subsubsection{Minimal foliations}
One of the fundamental results in the theory of partially hyperbolic dynamical systems is the existence of foliations $\cF^\sigma_f$,  tangent to the stable and unstable distributions $E^\sigma_f$ of  $f$, ($\sigma=s,u$). Those foliations are called unstable and stable foliations respectively.  Specifically, for any $x\in M$, there exists a leaf of $\cF^\sigma_f(x)$  containing $x$ and it corresponds to the classical unstable or stable manifold  $W^\sigma(x,f)$, $(\sigma=s,u)$, as shown in \cite{HPS77, BP73}
\begin{definition}
Consider a partially hyperbolic diffeomorphism $f: M \to M$. The foliation $\cF^\sigma_f$ is minimal if $W^\sigma(x,f)$, for all $x\in M$, is dense in $M$, $(\sigma=s,u)$.     
\end{definition}

\begin{definition}\label{defi-epsilon-minimality}
    Let $f:M\to M$ be a partially hyperbolic diffeomorphism. The  unstable foliation $\mathcal{F}^u_f$ of $f$ is $\varepsilon-$minimal if  there exists $R> 0$ such that if $D$ is a disk contained in an  unstable leaf of $\cF^u_f$  with an internal radius larger than $R$ then $D$ is $\varepsilon$-dense em $M$. 
\end{definition}

It is well known that if $f:M\to M$ is a partially hyperbolic diffeomorphism whose unstable foliation is minimal, then, for every $\varepsilon> 0$ there exists  a  $C^1$ neighborhood $\mathcal{U}$ of $f$ such that for any $g\in \mathcal{U}$, the unstable foliation is $\varepsilon$-minimal. 

\subsubsection{ Existence of equilibrium states}
In general, it can be difficult to prove that a partially hyperbolic system has equilibrium states.
   However, in some cases, we can solve this task using the concept of
   pressure of $f$, $P_{\mu}(f,\phi) $,  defined as 
 $$P_\mu(f, \phi):  \mathcal{M}_f \to \bR, \,\, \mu \mapsto P_{\mu}(f, \phi),\,\,
 \mbox{where $\phi$ is a continuous observable.}
 $$

Since $\cM_f$ is a compact set, if the pressure  is upper semicontinuous then
it achieves a maximal value and so $(f, \phi)$ has an equilibrium state. 
 
 For homeomorphisms, Bowen showed \cite{Bowen72} that the metric entropy
 is upper semicontinuous whenever $(M, f)$ is a $h$-expansive system. 
In particular, the pressure $P_\mu(f,\phi)$ is also upper semicontinuous  and then
$(f, \phi)$ admits an equilibrium state for any continuous potential $\phi$.
Hence,  since hyperbolic systems with one-dimensional center bundle are $h$-expansive,
 \cite{CY05, DFPV12, LVY13},  they have equilibrium states.

\subsection{Unstable entropy }

In this section, we recall the notions of unstable $h^u_\mu(f)$ and stable $h^s_\mu(f)$  metric entropy for a partial hyperbolic diffeomorphism $f$ introduced in \cite{yang2016, HHHY2017}.

\subsubsection{Unstable metric entropy}
Here we follow closely  \cite{HHHY2017}. 

Consider a partially hyperbolic diffeomorphism $f$  such that $\operatorname{dim}(E^u_f)\geq 1$. Let $\alpha$ be a  partition of $M$. We denote $\alpha(x)$ as the element of $\alpha$ containing the point $x$. If we have two  partitions $\alpha$ and $\beta$ such that $\alpha(x) \subset \beta(x)$ for all $x \in M$, we can write $\alpha \geq \beta$ or $\beta \leq \alpha$. if a partition $\xi$ satisfies $f^{-1}(\xi) \geq \xi$ we say that the partition is increasing.

For a measurable partition $\beta$, we use the notation $\beta_n^m = \bigvee\limits_{i=m}^n f^{-i}(\beta)$. In particular, $$\beta_{n-1}^0 = \bigvee\limits_{i=0}^{n-1} f^{-i}(\beta).$$

Take $\epsilon_0 > 0$  small and let $\cP = \cP_{\varepsilon_0}$ represent the set of finite measurable partitions of $M$ where each element has a diameter smaller than or equal to $\epsilon_0$. For each $\beta \in \cP$, we can define a finer partition $\eta$ such that $\eta(x) = \beta(x) \cap W^{u}_{\operatorname{loc}}(x)$ for every $x \in M$. Here, $W^{u}_{\operatorname{loc}}(x)$ represents the local unstable manifold at $x$ that has a size greater than the diameter $\epsilon_0$ of $\beta$. 
Note that $\eta$ is a measurable partition that satisfies $\eta \geq \beta$. We denote the set of such partitions as $\cP_{u} = \cP_{u,\epsilon_0}$.

A measurable partition $\xi$ of $M$ is subordinate to the unstable manifold of $f$ with respect to a measure $\mu$ if, for $\mu$-almost every $x$, $\xi(x)$ is a subset of $W^{u}(x)$ and contains an open neighborhood of $x$ within $W^{u}(x)$. 
If $\alpha \in \cP$ and $\mu(\partial\alpha) = 0$, where $\partial\alpha := \bigcup\limits_{A \in \alpha} \partial A$, then the corresponding $\eta$ given by $\eta(x) = \alpha(x) \cap W^{u}_{\operatorname{loc}}(x)$ is a partition that is subordinate to the unstable manifold of $f$.

Let us recall that, given measurable partition $\eta$ of a measure space $X$ and a probability measure $\nu$ defined on $X$, the canonical system of conditional measures for $\nu$ and $\eta$ is a collection of probability measures ${\nu_x^\eta : x \in X}$ satisfying $\nu_x^\eta(\eta(x)) = 1$. These measures have the property that for any measurable set $B \subset X$, the function $x \mapsto \nu_x^\eta(B)$ is measurable and the integral equation
$$
\nu(B)=\int_X \nu_x^\eta(B) d \nu(x)
$$
(See e.g. \cite{RO49} for reference.)

Remind that the information function of $\alpha \in \mathcal{P}$ is defined as
$$
I_\mu(\alpha)(x):=-\log \mu(\alpha(x))
$$
and the entropy of the partition $\alpha$ as
$$
H_\mu(\alpha):=\int_M I_\mu(\alpha)(x) d \mu(x)=-\int_M \log \mu(\alpha(x)) d \mu(x) .
$$
The conditional information function of $\alpha \in \mathcal{P}$ with respect to a measurable partition $\eta$ of $M$ is given by
$$
I_\mu(\alpha \mid \eta)(x):=-\log \mu_x^\eta(\alpha(x))
$$
Then the conditional entropy of $\alpha$ with respect to $\eta$ is defined as
$$
H_\mu(\alpha \mid \eta):=\int_M I_\mu(\alpha \mid \eta)(x) d \mu(x)=-\int_M \log \mu_x^\eta(\alpha(x)) d \mu(x) .
$$
We now introduce the notion of unstable metric entropy presented in \cite{HHHY2017} which is similar to the classical metric entropy but incorporates the use of a conditional partition $\eta$ to exclude the influence of central directions.
\begin{definition}
    The conditional entropy of $f$ with respect to a measurable partition $\alpha$ given $\eta \in \mathcal{P}^u$ is defined as
$$
h_\mu(f, \alpha \mid \eta)=\limsup_{n \rightarrow \infty} \frac{1}{n} H_\mu\left(\alpha_0^{n-1} \mid \eta\right) .
$$
The conditional entropy of $f$ given $\eta \in \mathcal{P}^u$ is defined as
$$
h_\mu(f \mid \eta)=\sup_{\alpha \in \mathcal{P}} h_\mu(f, \alpha \mid \eta)
$$
and the unstable metric entropy of $f$ is defined as
$$
h_\mu^u(f)=\sup_{\eta \in \mathcal{P}^u} h_\mu(f \mid \eta)
.$$
\end{definition}

It's possible to prove that $h_\mu(f \mid \eta)$ is independent of $\eta$, as long as it is in $\mathcal{P}^u$. Hence, we  have $h_\mu^u(f)=h_\mu(f \mid \eta)$ for any $\eta \in \mathcal{P}^u$.

If the dimension of the stable bundle $E^s_f$ is greater than or equal to 1, we can define the metric stable entropy for any $\mu \in \mathcal{M}_f$ as $h^s_\mu(f) := h^u_\mu(f^{-1})$. 

\subsubsection{ Unstable topological entropy and the variational principle}
Now we start to define the unstable topological entropy, introduced in \cite{SX10}.
We denote by $d^u$ the metric induced by the Riemannian structure on the unstable manifold and let $d_n^u(x, y)=\max _{0 \leq j \leq n-1} d^u\left(f^j(x), f^j(y)\right)$. Let $W^u(x, \delta)$ be the open ball inside $W^u(x)$ centred at $x$ of radius $\delta$ with respect to the metric $d^u$. Let $N^u(f, \epsilon, n, x, \delta)$ be the maximal number of points in $\overline{W^u(x, \delta)}$ with pairwise $d_n^u$-distances at least $\varepsilon$. 
We call such a set an $(n, \varepsilon)$ u-separated set of $\overline{W^u(x, \delta)}$.

\begin{definition}\label{def-u-s-entropia}
The unstable topological entropy of $f$ on $M$ is defined by
$$
h_{\text {top }}^u(f)=\lim_ {\delta \rightarrow 0} \sup _{x \in M} h_{\text {top }}^u\left(f, \overline{W^u(x, \delta)}\right),
$$
where
$$
h_{\text {top }}^u\left(f, \overline{W^u(x, \delta)}\right)=\lim _{\epsilon \rightarrow 0} \limsup _{n \rightarrow \infty} \frac{1}{n} \log N^u(f, \epsilon, n, x, \delta) .
$$    
\end{definition}

We can also define unstable topological entropy using $(n, \epsilon)$ u-spanning sets or open covers to get equivalent definitions. 
Analogously, if the dimension of the stable bundle $E^s_f$ is greater than or equal to 1, we can define the stable entropy for any $\mu \in \mathcal{M}_f$ as $h^s(f) := h^u_\mu(f^{-1})$.

As in the case of the usual definition of entropy, we can relate the metric unstable entropy with the unstable topological entropy by a variational principle. Indeed,  
\cite [Theorem D]{HHHY2017} estates that if $f : M \rightarrow M$ is a $C^{1}$-partially hyperbolic diffeomorphism then it holds
$$h^u_{top}(f) = \sup\{h_{\mathrm{u}}^{\mu}(f) : \mu \in \mathcal{M}_f\}
\,\,\,\mbox{and}\,\,\, h^u_{top}(f) = \sup\{h^u_{\nu}(f) : \nu \in \mathcal{M}^e_{f}\}.$$

{An alternative definition of topological unstable entropy can be derived  considering the unstable volume growth given by Hua, Saghin, and Xia (\cite{HYS08}), reminiscent from the works by Yomdin and Newhouse (\cite{Y87, N89}). 
In \cite[Theorem C]{HHHY2017}  shows that the unstable topological entropy as defined here coincides with the unstable volume growth.}
\section{Consequences of $h^u(f)-h^s(f)>0$}\label{s-consequences} 
Let $\Diff^{1 +}(M)$ be the set of $C^{1 +}$ diffeomorphisms defined on $M$.
From now on, $f$ will be a $C^{1 +}$ partially hyperbolic diffeomorfism with $1$-dimensional central direction with non trivial stable and unstable bundles, that is, $\operatorname{dim}(E^{\sigma})>0 (\sigma=s,u),$ and $\phi$  a H\"older continuous potential.

If $\mu$ is an ergodic  $f$-invariant measure we set $\lambda^c(\mu,f)$ for the Lyapunov exponent of $f$ in the central direction. An $f$-invariant ergodic measure $\mu$ is  {\em hyperbolic} if the central Lyapunov exponent satisfies $\lambda_c(f,\mu)\neq 0$.

Given $f\in \Diff^{1+}(M)$ and $\mu$ ergodic
the Ledrappier and Young formula establish that 

\begin{equation}\label{L-Yformula}
    h_\mu(f)\leq h_\mu^u(f)+ \sum_{\lambda_c(f,\mu)>0}\lambda_c(f,\mu) 
\end{equation}

This formula was proved in the case of $C^2$--diffeomorphisms  by Ledrappier and Young in \cite[Theorem C]{LYI85,LYII85} and it was proved for $C^{1+}$--diffeimorphisms  in \cite[Theorem 5.2]{Br22}. A directly consequence of this formula is that  if the central Lyapunov exponent of $\mu$ is non-positive then 
\begin{equation}\label{u-entropy-metric=entropy-metric}
    h_\mu(f)= h_\mu^u(f). 
\end{equation}

\begin{lemma}\label{mme-implies-center-lyapunov-exponent<0}
 Let $\mu$ be an ergodic equilibrium state for $(M,f,\phi)$. If $h^u(f)-h^s(f)>\sup \phi-\inf \phi\geq 0$, then $\lambda^c(\mu,f)<0$. 
\end{lemma}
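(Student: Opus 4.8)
The plan is to argue by contradiction: suppose $\mu$ is an ergodic equilibrium state for $(M,f,\phi)$ with $\lambda^c(\mu,f)\ge 0$, and deduce $h^u(f)-h^s(f)\le\sup\phi-\inf\phi$, contradicting the hypothesis. The first step is to turn the sign assumption into an identity for the entropy of $\mu$. Since $E^c$ is one–dimensional and $\lambda^c(\mu,f)\ge 0$, the central Lyapunov exponent of $f^{-1}$ is $\lambda^c(\mu,f^{-1})=-\lambda^c(\mu,f)\le 0$. As $f^{-1}$ is again a $C^{1+}$ partially hyperbolic diffeomorphism with one–dimensional center (with the roles of $E^s$ and $E^u$ exchanged) and $\mu$ is ergodic for $f^{-1}$, the consequence \eqref{u-entropy-metric=entropy-metric} of the Ledrappier–Young formula applies to $f^{-1}$ and yields $h_\mu(f^{-1})=h^u_\mu(f^{-1})$. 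Using $h_\mu(f^{-1})=h_\mu(f)$ and the definition $h^u_\mu(f^{-1})=h^s_\mu(f)$, this gives $h_\mu(f)=h^s_\mu(f)$; and by the variational principle \cite[Theorem D]{HHHY2017} applied to $f^{-1}$ (recall $\mathcal M_{f^{-1}}=\mathcal M_f$) we get $h_\mu(f)=h^s_\mu(f)\le h^s(f)$.

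Next I would squeeze the topological pressure between two bounds. Since $\mu$ is an equilibrium state, the upper bound
$$
P(f,\phi)=h_\mu(f)+\int\phi\,d\mu\le h^s(f)+\sup\phi
$$
follows from the previous step. For a lower bound, fix $\varepsilon>0$; by \cite[Theorem D]{HHHY2017} there is $\nu\in\mathcal M_f$ with $h^u_\nu(f)>h^u(f)-\varepsilon$, and using $h^u_\nu(f)\le h_\nu(f)$,
$$
P(f,\phi)\ge h_\nu(f)+\int\phi\,d\nu\ge h^u_\nu(f)+\inf\phi>h^u(f)-\varepsilon+\inf\phi ,
$$
so letting $\varepsilon\to 0$ gives $P(f,\phi)\ge h^u(f)+\inf\phi$. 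Comparing the two estimates, $h^u(f)+\inf\phi\le h^s(f)+\sup\phi$, that is $h^u(f)-h^s(f)\le\sup\phi-\inf\phi$, contradicting the hypothesis $h^u(f)-h^s(f)>\sup\phi-\inf\phi$. Hence $\lambda^c(\mu,f)<0$.

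The computations are short, and the only delicate input is the inequality $h^u_\nu(f)\le h_\nu(f)$, which is what allows one to pass from ``large unstable entropy'' to ``large pressure''; it is a basic property of unstable metric entropy proved in \cite{HHHY2017} and does not follow formally from the Ledrappier–Young inequality \eqref{L-Yformula}. Everything else is bookkeeping: checking that $f^{-1}$ inherits partial hyperbolicity with a one–dimensional center, that \eqref{u-entropy-metric=entropy-metric} and \cite[Theorem D]{HHHY2017} apply symmetrically to $f$ and $f^{-1}$, and that $h_\mu(f^{-1})=h_\mu(f)$ while $h^u_\mu(f^{-1})=h^s_\mu(f)$ by definition. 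I do not anticipate a genuine obstacle.
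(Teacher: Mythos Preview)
Your proof is correct and follows essentially the same strategy as the paper's: assume $\lambda^c(\mu,f)\ge 0$, apply \eqref{u-entropy-metric=entropy-metric} to $f^{-1}$ to get $P_\mu(f,\phi)\le h^s(f)+\sup\phi$, and compare with the lower bound $P(f,\phi)\ge h^u(f)+\inf\phi$. The only cosmetic difference is that the paper obtains this lower bound via $h^u(f)\le h_{top}(f)$ (together with the existence of an MME), whereas you use the unstable variational principle together with $h^u_\nu(f)\le h_\nu(f)$; both routes are equivalent and standard.
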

\begin{proof}
Suppose that $\mu$ is an ergodic equilibrium state for $f$ with a non-negative center Lyapunov exponent $\lambda^c(\mu,f)\geq 0$. By  (\ref{u-entropy-metric=entropy-metric}) we have
$$P_\mu(f,\phi)=h_\mu(f)+\int \phi d\mu=h^s_\mu(f)+\int \phi d \mu \leq h^s(f)+\sup \phi.$$

By hypothesis, $h^u(f)+\inf \phi>h^s(f)+\sup\phi $ and $h^u(f)\leq h_{top}(f)$ and thus we get
$P_\mu(f,\phi)<h^u(f)+\inf \phi\leq P(f,\phi)$, which contradicts that $\mu$ is an ergodic equilibrium state. This finishes the proof.
\end{proof}
\begin{definition}
        The unstable pressure is defined as 
        $$P^u(f,\phi)=\sup\{h_\mu^u(f) + \int\phi d\mu, \mu \in \mathcal{M}_f\}.$$
\end{definition}

\begin{remark}\label{h_{top}(f)=humu(f)} 
  By equation (\ref{u-entropy-metric=entropy-metric}), if $\mu$ is an ergodic measure with $\lambda_c(\mu, f)<0$, we obtain $P_\mu(f,\phi)=P^u_\mu(f,\phi)=h^u_{\mu}(f)+\int\phi d\mu$. 
  If in addition $\mu$ is a ergodic EE and $\phi$ satisfies $h^u(f)-h^s(f)>\sup \phi-\inf \phi\geq 0$, Lemma
  \ref{mme-implies-center-lyapunov-exponent<0} implies that $\lambda_c(\mu)<0$ and
  thus we obtain    $P(f,\phi)=h^u_\mu(f)+\int \phi d\mu= P^u(f,\phi)$.
\end{remark}

In the remainder  of this section, we prove  that the property 
\begin{equation}\label{gap-condition}
h^{u}(f)>\sup \phi\geq \inf \phi> h^s(f)
\end{equation}

%$$\st{h^u(f)> h^s(f)}$$
 persists for diffeomorphisms $g \in  \operatorname{Diff}^{1+}(M)$ in an $C^1$ open neighborhood of $f$.

 For this, we proceed as follows.

\begin{lemma}\label{unstable-pressure-is-continues}
 The map $g \mapsto P^u(g,\phi)$ with $g\in  \operatorname{Diff}^{1+}(M)$ is a continuous function in $f$ with the $C^1$ topology whenever \eqref{gap-condition} holds.
\end{lemma}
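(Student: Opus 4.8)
The plan is to prove continuity of $g\mapsto P^u(g,\phi)$ at $f$ by establishing upper and lower semicontinuity separately, exploiting the variational principle $P^u(g,\phi)=\sup\{h^u_\mu(g)+\int\phi\,d\mu:\mu\in\mathcal M_g\}$ from \cite[Theorem D]{HHHY2017} together with the coincidence $h^u_\mu(g)=h_{\mathrm{top}}^u(g,\overline{W^u(x,\delta)})$-type estimates and, crucially, Remark \ref{h_{top}(f)=humu(f)}, which tells us that when the gap condition \eqref{gap-condition} holds the unstable pressure is realized by a measure with negative central exponent, so that $P^u(g,\phi)=P(g,\phi)$ coincides with the ordinary topological pressure on the relevant range. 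First I would recall that for $C^{1+}$ (indeed $C^1$) partially hyperbolic diffeomorphisms with one-dimensional center the unstable topological entropy $h^u_{\mathrm{top}}$ varies continuously in the $C^1$ topology — this is the volume-growth description of $h^u$ (\cite[Theorem C]{HHHY2017}, reminiscent of Yomdin--Newhouse), which depends continuously on the derivative cocycle restricted to the unstable bundle, and the unstable bundle itself varies continuously (indeed Hölder-continuously) with $g$ in the $C^1$ topology by structural stability of the dominated splitting. Adding a fixed Hölder potential $\phi$ does not destroy this, since $\int\phi\,d\mu$ is bounded by $\sup|\phi|$ uniformly.

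The key steps, in order, would be: (1) show upper semicontinuity of $g\mapsto P^u(g,\phi)$ at $f$. For this I would use that $P^u(g,\phi)\le h^u_{\mathrm{top}}(g)+\sup\phi$ always, but more sharply I would take, for $g$ near $f$, an ergodic measure $\mu_g$ nearly achieving $P^u(g,\phi)$; by the gap condition (which persists in a $C^1$ neighborhood, as is discussed in the paragraph following Definition \ref{defi-epsilon-minimality} and via the Ledrappier--Young inequality \eqref{L-Yformula}) one argues $\lambda^c(\mu_g,g)<0$, hence $h^u_{\mu_g}(g)=h_{\mu_g}(g)$ by \eqref{u-entropy-metric=entropy-metric}, so $P^u(g,\phi)=P_{\mu_g}(g,\phi)\le P(g,\phi)$; and $g\mapsto P(g,\phi)$ is upper semicontinuous because the systems are entropy-expansive with one-dimensional center (as recalled in Section \ref{ss-PH} via \cite{CY05,DFPV12,LVY13}), and entropy-expansivity is robust, giving uniform upper semicontinuity of the metric entropy, hence of the pressure. (2) Show lower semicontinuity at $f$: pick an ergodic equilibrium state $\mu$ for $(M,f,\phi)$ (which exists by $h$-expansivity); by Lemma \ref{mme-implies-center-lyapunov-exponent<0}, $\lambda^c(\mu,f)<0$, so $P^u(f,\phi)=h^u_\mu(f)+\int\phi\,d\mu=P(f,\phi)$ by Remark \ref{h_{top}(f)=humu(f)}. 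Now use that $h^u_{\mathrm{top}}$ (volume-growth version) and more generally the unstable-pressure functional can be bounded below near $f$ by exhibiting, for $g$ near $f$, $(n,\varepsilon)$ $u$-separated sets on unstable plaques of size comparable to those for $f$: the unstable manifolds of $g$ converge to those of $f$ in the $C^1$ sense on compact pieces, and the weights $e^{\Phi_0(x,n)}$ are controlled by continuity of $\phi$, so $\liminf_{g\to f}P^u(g,\phi)\ge P^u(f,\phi)$. Combining (1) and (2) with $P^u\le P$ and $P^u(f,\phi)=P(f,\phi)$ yields $P^u(g,\phi)\to P^u(f,\phi)$.

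The main obstacle is step (2), the lower semicontinuity, specifically the passage from the volume-growth/topological characterization of $h^u$ for $f$ to a genuinely uniform lower bound for $g$ in a neighborhood: one must ensure that the unstable plaques of $g$, and the separated sets built inside them, can be chosen to depend continuously on $g$ so that no entropy is lost in the limit. I would handle this by working with the $(n,\varepsilon)$ $u$-spanning/separated formulation of Definition \ref{def-u-s-entropia}, using that the restriction of $Dg$ to $E^u_g$ converges to that of $Df$ to $E^u_f$ uniformly (continuity of the dominated splitting), so that the local unstable manifolds $W^u_{\mathrm{loc}}(x,g)$ vary continuously and the $d^u_n$-metrics are uniformly comparable on a fixed scale; an $(n,\varepsilon)$ $u$-separated set for $f$ then perturbs to an $(n,\varepsilon/2)$ $u$-separated set for $g$ with comparable cardinality and comparable Birkhoff sums of $\phi$. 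A secondary subtlety is checking that the gap condition \eqref{gap-condition} really is $C^1$-open: $h^u_{\mathrm{top}}(g)\to h^u_{\mathrm{top}}(f)$ by the volume-growth continuity, and $h^s_{\mathrm{top}}(g)=h^u_{\mathrm{top}}(g^{-1})\to h^s_{\mathrm{top}}(f)$ likewise, so the strict inequalities $h^u(f)>\sup\phi\ge\inf\phi>h^s(f)$ survive; this is exactly what licenses the use of Remark \ref{h_{top}(f)=humu(f)} uniformly in the neighborhood, and it is stated as the goal of the remainder of the section.
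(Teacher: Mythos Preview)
Your approach diverges from the paper's in both halves, and the lower-semicontinuity half has a genuine gap.

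For upper semicontinuity, the paper simply cites \cite[Theorem A]{yang2016}, which gives upper semicontinuity of $g\mapsto P^u(g,\phi)$ directly. Your route through $P^u(g,\phi)\le P(g,\phi)$ and robust $h$-expansivity can be made to work (note that $h^u_\mu\le h_\mu$ holds unconditionally, so the detour through $\lambda^c(\mu_g,g)<0$ is unnecessary), but as written it invokes the persistence of the gap condition in a neighborhood of $f$. In the paper that persistence is Corollary~\ref{hu>hs aberto}, which is a \emph{consequence} of the present lemma; you attempt to break the circularity by claiming continuity of $h^u$ via volume growth, but that is precisely the $\phi=0$ case of what you are trying to prove.

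The real problem is your lower-semicontinuity argument. Perturbing an $(n,\varepsilon)$ $u$-separated set for $f$ to one for $g$ requires control of the $g$-unstable plaques and of $d^u_n$ for \emph{all} $n$ simultaneously, but the $C^1$ closeness of unstable manifolds is only uniform on compact pieces, so the admissible neighborhood of $f$ shrinks as $n\to\infty$. This is exactly the classical obstruction to lower semicontinuity of entropy, and the sketch you give does not overcome it. The paper's mechanism is different and is the missing idea: one first shows that any ergodic equilibrium state $\mu$ for $(f,\phi)$ has $h_\mu(f)>0$ (Claim~\ref{claim1}) and $\lambda^c(\mu,f)<0$, so $\mu$ is hyperbolic with positive entropy; then Katok's horseshoe theorem produces a genuine hyperbolic set $\Lambda_\varepsilon$ with $P(f|_{\Lambda_\varepsilon},\phi)$ arbitrarily close to $P^u(f,\phi)$. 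Structural stability of $\Lambda_\varepsilon$ gives, for every nearby $g$, a conjugate hyperbolic set $\Lambda_g$ with the same pressure up to a controlled error, and since the hyperbolic continuation still has negative center exponent one gets $P(g|_{\Lambda_g},\phi)\le P^u(g,\phi)$. This transfers the lower bound to $g$ uniformly in $n$, which your separated-set argument cannot do, and it uses the gap condition only at $f$, avoiding the circularity above.
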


\begin{proof} We start proving the following result guarantying that equilibrium states has
positive entropy, allowing to approximate the topological pressure  by  Katok's horseshoe, see \cite{K80, KH95}:

\begin{claim}\label{claim1}
If $\mu$ is an ergodic equilibrium state for $(f,\phi)$ then  $h_\mu(f)> 0$. 
\end{claim}

To achieve this, it is enough to verify that an ergodic maximal entropy measure has higher pressure than a measure with zero entropy. For this, we do as follows.

Let $\nu,\mu\in \mathcal{M}_f$  such that $h_\nu(f)=0$ and $\mu$ is an ergodic measure of maximal entropy. Thus $P_\nu(f,\phi) =\int \phi d\nu\leq \sup \phi$ and by hypothesis we get
\begin{equation}\label{e-pressao1}
P_\nu(f,\phi)\leq \sup \phi< h^u(f).
\end{equation}

By  Remark \ref{h_{top}(f)=humu(f)} when  the potential $\phi$ is identically zero,  $h^u(f)=h_{top}(f)$. 
Since $\mu$ is a maximal entropy measure we get $h^u(f)=h_{\mu}(f)$. Thus, using  (\ref{e-pressao1}) we get
\begin{equation*}
P_\mu(f,\phi)=h_\mu(f) + \int \phi d\mu \geq h_\mu(f) + \inf(\phi) \geq h_\mu(f)=h^u(f) > P_\nu(f,\phi).
\end{equation*}
This finishes the proof of  the Claim \ref{claim1}. \hspace{3cm} $\square$

Returning to the proof of Lemma \ref{unstable-pressure-is-continues},
 we will prove that the unstable pressure of $f$, $P^u(f)$, is continuous at $f$ when we restrict 
 it to  $C^{1+}$ diffeomorphisms.
 For this, since the map $g \mapsto P^u(g,\phi)$ is $C^1$ upper semicontinuous at $f$ \cite[Theorem A]{yang2016},
 it is enough to show that $P^u$ is lower semicontinuous at $f$.

 Let $\varepsilon>0$ and suppose that $\mu$ is an ergodic equilibrium state.
 By Lemma \ref{mme-implies-center-lyapunov-exponent<0} and  Remark \ref{h_{top}(f)=humu(f)}, we have that $P(f,\phi)=h^u_\mu(f)+\int \phi d\mu=P^u(f,\phi)$ and  $\lambda^c(\mu,f)<0$.

Since  $\mu$ is a hyperbolic measure with $h_\mu(f) > 0$ (see Claim \ref{claim1}), by classical results from Katok \cite{K80, KH95} (see also \cite[Theorem 1]{Gel16}), we conclude that there exists a hyperbolic set $\Lambda_\varepsilon \subset M$ such that

\begin{equation}\label{aproximate-by-hyperboly-set}
  P^u(f,\phi)-\frac{\varepsilon}{2}\leq P(f_{|\Lambda_\varepsilon},\phi_{|\Lambda}). 
\end{equation}

As $\Lambda_\varepsilon$ is a hyperbolic set for $f$, we have that $(f_{|\Lambda_\varepsilon},\Lambda_\varepsilon)$ is an expansive map and therefore, there exists an ergodic measure $\mu_\varepsilon$ such that $P_{\mu_\varepsilon}(f,\phi)=P(f_{|\Lambda_\varepsilon}\phi_{|\Lambda_\varepsilon}).$ Thus
$$
 P^u(f,\phi)-\frac{\varepsilon}{2}\leq 
 P_{\mu_\varepsilon}(f,\phi).
 $$

Since hyperbolic systems are structurally stable,  there exists 
a $C^1$ neighborhood $\mathcal{U}(f)$ of $f$ such that if $g\in \mathcal{U}(f)$, there exists an $g-$invariant hyperbolic set  $\Lambda_g\subset M$ such that $g_{|\Lambda_g}$ and $f_{|\Lambda_\varepsilon}$ are topologically conjugate by a homeomorphism $h_g:M \to M$ 
satisfying 
\begin{equation}
    \|h_g-I\|<\delta,
\end{equation}
where $\delta>0$ is such that if $\|I-h\|<\delta$ then 
\begin{equation}\label{close-conjugated}
    \|\phi-\phi\circ h_g\|<\frac{\varepsilon}{2}.
\end{equation}

Since $h_g$ conjugates $f$ and $g$, for every $g \in \mathcal{V}$, we obtain 

\begin{equation}\label{pressure-iqual}
P(f_{|\Lambda_\varepsilon}\phi_{|\Lambda_\varepsilon})=P(g_{|\Lambda_g},
\phi\circ h_{|\Lambda_g}),
\end{equation}
and by  (\ref{close-conjugated}), we obtain

\begin{equation}\label{close pressure}
P(g_{|\Lambda_g},\phi\circ h_{|\Lambda_g})-\frac{\varepsilon}{2}\leq P(g_{|\Lambda_g},\phi_{|\Lambda_g}).
\end{equation}

Now, since $g_{|\Lambda_g}$ is conjugated with $f_{|\Lambda_{\varepsilon}}$ and $\Lambda_g$ is a hyperbolic set for $g$, we obtain that  $(g_{|\Lambda_g},\phi_{|\Lambda_g)}$ has an ergodic equilibrium state $\mu_g$ with $\lambda_c(\mu_g,g)<0$. 
Thus, by  (\ref{u-entropy-metric=entropy-metric}), 
we have that $P(g_{|\Lambda_g},\phi_{|\Lambda_g)}=P^u_{\mu_g}(g,\phi)\leq P^u(g,\phi)$ for every $g\in  \operatorname{Diff}^{1+}(M)$. 
Therefore, using  (\ref{pressure-iqual}) and (\ref{close pressure}) we 
obtain

\begin{equation*}
    P(f_{|\Lambda_\varepsilon},\phi)-\frac{\varepsilon}{2}\leq P^u(g,\phi),
\end{equation*}
and by  (\ref{aproximate-by-hyperboly-set}) we have that
$$P^u(f,\phi)-\varepsilon\leq P^u(g,\phi).$$
Therefore, $P^u(f,\phi)-\varepsilon\leq P^u(g,\phi)$ for every $g\in \mathcal{U} \cap  \operatorname{Diff}^{1+}(M)$ which implies that $g\mapsto P^u(g,\phi)$ is lower semicontinuous.
\end{proof}

    \begin{corollary}\label{hu>hs aberto} If there are positive numbers  $a$ and $b$ such that $h^u(f)> a > b>  h^s(f)$ then  there exists a $C^1$-neighborhood $\widetilde{\mathcal{V}}$ of $f$ such that for every  $g\in \widetilde{\mathcal{V}}\cap  \operatorname{Diff}^{1+}(M)$, $h^u(g)>a>b>h^s(g)$. 
\end{corollary}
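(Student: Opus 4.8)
The plan is to deduce the corollary directly from Lemma~\ref{unstable-pressure-is-continues}, applied to a constant potential, together with the upper semicontinuity of the unstable topological entropy. The one elementary remark that makes this work is that for any constant $c\in\bR$ and any $g\in\operatorname{Diff}^{1+}(M)$ one has $P^u(g,c)=h^u(g)+c$, since $\int c\,d\mu=c$ for every $\mu\in\mathcal{M}_g$; hence the behaviour of $g\mapsto P^u(g,c)$ near $f$ is exactly the behaviour of $g\mapsto h^u(g)$ near $f$.

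First I would fix a constant $c$ with $b<c<a$. Then $h^u(f)>a>c$ and $c>b>h^s(f)$, so the (constant, hence H\"older) potential $\phi\equiv c$ satisfies the gap condition \eqref{gap-condition} relative to $f$. Lemma~\ref{unstable-pressure-is-continues} then gives that $g\mapsto P^u(g,c)=h^u(g)+c$ is continuous at $f$ in the $C^1$ topology on $\operatorname{Diff}^{1+}(M)$; in particular $g\mapsto h^u(g)$ is lower semicontinuous at $f$, which produces a $C^1$-neighborhood $\mathcal{V}_1$ of $f$ with $h^u(g)>a$ for every $g\in\mathcal{V}_1\cap\operatorname{Diff}^{1+}(M)$.

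For the stable side I would pass to inverses. Recall that $h^s(g)=h^u(g^{-1})$, that $g^{-1}$ is again a $C^{1+}$ partially hyperbolic diffeomorphism with one-dimensional center (its invariant splitting being $E^u_g\oplus E^c_g\oplus E^s_g$ read backwards), and that $g\mapsto g^{-1}$ is continuous in the $C^1$ topology. Applying \cite[Theorem A]{yang2016} with the zero potential at the base point $f^{-1}$ shows that $h\mapsto P^u(h,0)=h^u(h)$ is upper semicontinuous at $f^{-1}$; composing with inversion, $g\mapsto h^s(g)$ is upper semicontinuous at $f$. Since $h^s(f)<b$, this yields a $C^1$-neighborhood $\mathcal{V}_2$ of $f$ with $h^s(g)<b$ for every $g\in\mathcal{V}_2$. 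Taking $\widetilde{\mathcal{V}}=\mathcal{V}_1\cap\mathcal{V}_2$ finishes the argument.

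Most of this is routine, and the one point that needs attention is the asymmetry between the two estimates. For the inequality $h^u(g)>a$ we genuinely need the lower semicontinuity supplied by Lemma~\ref{unstable-pressure-is-continues}, and this is why one must use a strictly positive constant $c\in(b,a)$ rather than the zero potential: with $\phi\equiv 0$ the gap condition \eqref{gap-condition} would read $h^u(f)>0>h^s(f)$, which is impossible because $h^s(f)\ge 0$. For the inequality $h^s(g)<b$ no analogue of Lemma~\ref{unstable-pressure-is-continues} is available (its hypothesis for $f^{-1}$ would require $h^s(f)>h^u(f)$), but there only upper semicontinuity of $h^s$ is needed, and that holds unconditionally by \cite{yang2016}; so the argument still closes.
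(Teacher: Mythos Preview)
Your proof is correct and follows the same approach as the paper: continuity of $h^u$ at $f$ from Lemma~\ref{unstable-pressure-is-continues}, upper semicontinuity of $h^s$ from \cite{yang2016}, then intersecting neighborhoods. Your device of using the constant potential $\phi\equiv c\in(b,a)$ so that \eqref{gap-condition} is literally satisfied is in fact more careful than the paper, which invokes the lemma directly for $h^u$ (implicitly taking $\phi=0$).
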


\begin{proof} Note that in the case when \st{in the case} \eqref{gap-condition} holds 
%$h^u(f)> \sup \phi>\inf \phi> h^s(f)$ 
 it is sufficient to take $a=\sup \phi$ and $b=\inf \phi$.

Assume there are such positive numbers numbers $a$ and $b$ satisfying 
$h^u(f)> a > b>  h^s(f)$. Then, $h^u(f) - h^s(f)> 0$ and we can apply  Lemma \ref{unstable-pressure-is-continues}, obtaining   that 
 the unstable entropy $h^u:  \operatorname{Diff}^{1+}(M) \to \bR$ is continuous at $f$ with the $C^1$ topology.
 Hence, given $\varepsilon > 0$, there is a $C^1$-neighborhood $\widetilde{\mathcal{V}}
$ of $f$ such that if $g \in \widetilde{\mathcal{V}} \cap  \operatorname{Diff}^{1+}(M)$ then 
 $|h^u(f) - h^u(g)| < \varepsilon$.
 
 Since the stable entropy $h^s: \operatorname{Diff}^{1+}(M) \to \bR$ is upper semicontinuous, we can take a neighborhood 
 $\widetilde{\cU} \subset  \operatorname{Diff}^{1}(M)$ of $f$ such that if  $g\in \widetilde{\cU}$ then
  $h^s(g) - h^s(f) < \varepsilon$.
  
Now, let $0<\varepsilon<\frac{\min\{h^u-a,b-h^s\}}{2}$ and $\cV=\widetilde{\mathcal{V}}
  \cap \widetilde{\cU}$. 
  Then, for all $g \in \cV$ we have $h^u(g)> a>b > h^s(g)$, finishing the proof.
\end{proof}

\begin{corollary}\label{pressure-continua}
%If $h^u(f)-h^s(f)> \sup \phi-\inf \phi\geq 0$, 
The map $P(\cdot, \phi):  \operatorname{Diff}^{1+}(M) \to \bR$
is continuous at $f$ in the $C^1$ topology whenever  \eqref{gap-condition} holds.
\end{corollary}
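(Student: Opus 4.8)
The plan is to reduce the continuity of $g\mapsto P(g,\phi)$ at $f$ to the continuity of $g\mapsto P^u(g,\phi)$ at $f$, which is exactly Lemma \ref{unstable-pressure-is-continues}. The bridge between the two is the identity $P(g,\phi)=P^u(g,\phi)$, which I will argue holds for every $g$ in a sufficiently small $C^1$-neighborhood of $f$; once this is in hand, continuity of $P(\cdot,\phi)$ at $f$ follows immediately.

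First I would fix a $C^1$-neighborhood $\mathcal{W}$ of $f$ small enough that every $g\in\mathcal{W}$ is a partially hyperbolic diffeomorphism with one-dimensional center bundle (partial hyperbolicity is a $C^1$-open condition and the dimensions of the invariant sub-bundles are locally constant), and small enough that, by Corollary \ref{hu>hs aberto}, the gap condition \eqref{gap-condition}, namely $h^u(g)>\sup\phi\geq\inf\phi>h^s(g)$, persists for every $g\in\mathcal{W}\cap\operatorname{Diff}^{1+}(M)$; here one chooses auxiliary numbers $a,b$ with $h^u(f)>a>b>h^s(f)$ and $a>\sup\phi\geq\inf\phi>b$, which is possible precisely because \eqref{gap-condition} holds for $f$.

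Next, fixing $g\in\mathcal{W}\cap\operatorname{Diff}^{1+}(M)$, I would produce an ergodic equilibrium state for $(g,\phi)$. Since $g$ is a $C^{1+}$ partially hyperbolic diffeomorphism with one-dimensional center, it is $h$-expansive, so $\mu\mapsto P_\mu(g,\phi)$ is upper semicontinuous on the compact set $\mathcal{M}_g$ and attains its supremum; thus $(g,\phi)$ has an equilibrium state. Because $\mu\mapsto P_\mu(g,\phi)$ is affine, the set of equilibrium states is a nonempty compact convex face of $\mathcal{M}_g$, hence has an extreme point by Krein--Milman, and this extreme point is an extreme point of $\mathcal{M}_g$, therefore ergodic; call it $\mu_g$. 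As $g$ satisfies the gap condition, Lemma \ref{mme-implies-center-lyapunov-exponent<0} gives $\lambda^c(\mu_g,g)<0$, and then the argument of Remark \ref{h_{top}(f)=humu(f)}, applied to $g$ in place of $f$ (all standing hypotheses of Section \ref{s-consequences} still hold for $g$), yields $P(g,\phi)=h^u_{\mu_g}(g)+\int\phi\,d\mu_g=P^u(g,\phi)$. In particular $P(f,\phi)=P^u(f,\phi)$.

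Finally I would conclude: for every $g\in\mathcal{W}\cap\operatorname{Diff}^{1+}(M)$ one has $P(g,\phi)=P^u(g,\phi)$, and by Lemma \ref{unstable-pressure-is-continues} the map $g\mapsto P^u(g,\phi)$ is continuous at $f$ in the $C^1$ topology; hence $P(g,\phi)\to P^u(f,\phi)=P(f,\phi)$ as $g\to f$, which is the assertion. I do not anticipate a serious obstacle: the only points needing care are the $C^1$-openness of the structural hypotheses (so that a nearby $g$ is still partially hyperbolic with one-dimensional center, hence $h$-expansive and possessing equilibrium states) and the passage from existence of an equilibrium state to existence of an ergodic one, which is what makes Remark \ref{h_{top}(f)=humu(f)} applicable; everything else is assembling results already established.
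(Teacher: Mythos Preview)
Your proposal is correct and follows essentially the same route as the paper: use Corollary \ref{hu>hs aberto} to propagate the gap condition to nearby $g$, invoke Remark \ref{h_{top}(f)=humu(f)} to obtain $P(g,\phi)=P^u(g,\phi)$, and then conclude via the continuity of $g\mapsto P^u(g,\phi)$ from Lemma \ref{unstable-pressure-is-continues}. The only difference is that you spell out the existence of an ergodic equilibrium state (via $h$-expansivity and Krein--Milman), which the paper leaves implicit in its appeal to Remark \ref{h_{top}(f)=humu(f)}.
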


\begin{proof}
 Without loss of generality, assume suppose that  $h^u(f)> \sup \phi>\inf \phi> h^s(f)$. Let $\varepsilon>0$.
    Lemma \ref{unstable-pressure-is-continues} and Corollary \ref{hu>hs aberto} imply that there exists a $C^1$ neighborhood of $f$ so that for every $g\in \mathcal{U}$
    $|P^u(f,\phi)-P^u(g,\phi)|<\varepsilon$ and $h^u(g)> \sup \phi>\inf \phi> h^s(g)$. 
    
    Therefore, by the Remark \ref{h_{top}(f)=humu(f)}, $P^u(g,\phi)=P(g,\phi)$ and thus
    $$|P(f,\phi)-P(g,\phi)|<\varepsilon,$$
    for every $g\in \mathcal{U},$ ending the proof.
\end{proof}

\section{ Proof of Theorem \ref{teo-main}}\label{s-prova-teoA}

\subsection{Choosing the decomposition}\label{choosing descomposition}

In this section, we follow closely \cite{climThom2021}.
The aim is to provide a large collection of orbit segments  with a decomposition 
$(\cP_g, \cG_g, \cS_g)$ as in Section \ref{decompositions} in such a way that $\cG_g$ captures all the "hyperbolicity" of every $g$ close to $f$ in the $C^1$ topology,  to overcome  lack of hyperbolicity outside $\cG$. 

 Let $\widetilde{\cV}$ be a $C^1$ neighborhood of $f$ as in Corollary \ref{hu>hs aberto}.  We can assume, without loss of generality, for $g\in \widetilde{\cV}$ it holds $\|Dg_{E^s}\|,\| Dg^{-1}_{E^u}\|<\xi$, where $\xi$ is given at Definition \ref{def-partialh}. 

For every $g\in \widetilde{\mathcal{V}}\cap  \operatorname{Diff}^{1+}(M)$ we set $\varphi^c(x):=\log\|Dg_{|E^c(x)}\|$. 
If $\mu \in \mathcal{M}_g$ we set  $\lambda_c(\mu,g):=\int \varphi^c d\mu.$ 
When $\mu$ is ergodic and the central direction of a $C^{1 +}$ partially hyperbolic diffeomorphism is $1$-dimensional, $\lambda_c(\mu,g)$ coincides with the central Lyapunov exponent of $g$.
Let $ P^+(g,\phi)$ and $ P^-(g,\phi)$ be defined as below:
\begin{equation*}
 P^+(g,\phi)=\sup \{P_\mu(g,\phi): \mu\in M^e_g,\lambda_c(\mu,g)\geq 0\}
\end{equation*}
and
\begin{equation*}
    P^-(g,\phi)=\sup \{P_\mu(g,\phi) : \mu\in M^e_g,\lambda_c(\mu,g)\leq 0\}.
\end{equation*}
 By the Corollary \ref{hu>hs aberto} and the Lemma \ref{mme-implies-center-lyapunov-exponent<0} there is no ergodic equilibrium state $\mu$ with  $\lambda_c(\mu,g)\geq 0$ and the pressure is upper semicontinuous, we have that 
 $$P^+(g,\phi)<P^-(g,\phi) \text{ for every } g\in \widetilde{\cV}\cap  \operatorname{Diff}^{1+}(M),$$
 and hence by the ergodic decomposition theorem, we have
$$\sup \{P_\mu(g,\phi): \mu\in \cM_g,\lambda_c(\mu,g)\geq 0\}<P(g,\phi) \text{ for every } g\in \widetilde{\cV} \cap  \operatorname{Diff}^{1+}(M).$$

%\textcolor{magenta}{MUDANDO}
%\begin{lemma}%\label{bad-UH-small-pressure}
%    There exist $r>0$, $a>0$ and a $C^1$ neighborhood ${\cV}'\subset \widetilde{\cV}$  
% of $f$ such that, for every $g\in {\cV}\cap   \operatorname{Diff}^{1+}(M)$ it holds
%\begin{equation}\label{hPP<Pto}
%    \sup \{P_\mu(g,\phi): \mu\in \mathcal{M}_g,\lambda_c(\mu,g)\geq -r\}<a<P(g,\phi).
%\end{equation}
%\end{lemma}
%
%
%
%\textcolor{magenta}{FIM MUDANDO}

\begin{lemma}\label{bad-UH-small-pressure}
    There exists $r>0$ and a $C^1$ neighborhood ${\cV}'\subset \widetilde{\cV}$ such that 
    $$\sup \{P_\mu(g,\phi): \mu\in \mathcal{M}_g,\lambda_c(\mu,g)\geq -r\}<P(g,\phi),
    \mbox{ for every }  g\in \cV'\cap   \operatorname{Diff}^{1+}(M).$$
\end{lemma}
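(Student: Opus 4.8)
The plan is to argue by contradiction using a compactness-of-measures / upper-semicontinuity argument. Suppose the conclusion fails. Then there is a sequence $r_k \to 0^+$ and a sequence of diffeomorphisms $g_k \to f$ in the $C^1$ topology, with $g_k \in \widetilde{\cV}\cap \operatorname{Diff}^{1+}(M)$, together with ergodic (by the ergodic decomposition theorem, as in the discussion preceding the lemma we may pass to ergodic components) measures $\mu_k \in \cM_{g_k}$ satisfying $\lambda_c(\mu_k,g_k)\geq -r_k$ and $P_{\mu_k}(g_k,\phi)\geq P(g_k,\phi)$. First I would record that $\mu_k$ is then an equilibrium state for $(g_k,\phi)$, so by Claim \ref{claim1} (applied to $g_k$, which also lies in the neighborhood where the gap condition \eqref{gap-condition} holds, shrinking $\widetilde{\cV}$ if necessary) we have $h_{\mu_k}(g_k)>0$; more importantly, Lemma \ref{mme-implies-center-lyapunov-exponent<0} applied to $g_k$ gives $\lambda_c(\mu_k,g_k)<0$ — but this is not yet a contradiction since the bound $\geq -r_k$ only forces the exponents toward $0$ from below, not that they are nonnegative.

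The core of the argument is a limiting step. Pass to a subsequence so that $\mu_k \to \mu$ weak-$*$ for some $\mu \in \cM(M)$; since $g_k \to f$ in $C^1$ (hence $C^0$), $\mu$ is $f$-invariant. Because the central bundles $E^c_{g_k}$ vary continuously with $g$ in the $C^1$ topology and $\varphi^c_{g_k}(x)=\log\|Dg_{k\,|E^c_{g_k}(x)}\|$ converges uniformly to $\varphi^c_f$, we get
\begin{equation*}
\lambda_c(\mu,f)=\int \varphi^c_f \, d\mu = \lim_{k\to\infty}\int \varphi^c_{g_k}\, d\mu_k = \lim_{k\to\infty}\lambda_c(\mu_k,g_k)\geq \lim_{k\to\infty}(-r_k)=0,
\end{equation*}
so $\lambda_c(\mu,f)\geq 0$. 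On the other hand, I would use upper semicontinuity of the entropy / pressure as a function of the measure combined with the $C^1$-continuity of the pressure function $g\mapsto P(g,\phi)$ at $f$ (Corollary \ref{pressure-continua}, valid since \eqref{gap-condition} holds) to pass to the limit in $P_{\mu_k}(g_k,\phi)\geq P(g_k,\phi)$: namely $h_{\mu_k}(g_k)$ and $\int\phi\,d\mu_k$ pass to limits dominated by $h_\mu(f)$ and $\int\phi\,d\mu$, while $P(g_k,\phi)\to P(f,\phi)$, yielding $P_\mu(f,\phi)\geq P(f,\phi)$, i.e. $\mu$ is an equilibrium state for $(f,\phi)$. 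Then taking an ergodic component of $\mu$ of full pressure (ergodic decomposition) we obtain an ergodic equilibrium state for $(f,\phi)$ with central exponent $\geq 0$, contradicting Lemma \ref{mme-implies-center-lyapunov-exponent<0}. An alternative, cleaner route avoiding the entropy semicontinuity in the moving system $g_k$: use that the displayed strict inequality $\sup\{P_\mu(g,\phi):\mu\in\cM_g,\lambda_c(\mu,g)\geq 0\}<P(g,\phi)$ already holds on $\widetilde{\cV}$, together with a uniform (in $g$) version of the gap, to produce a fixed $\eta_0>0$ with $P^+(g,\phi)\leq P(g,\phi)-\eta_0$ for $g$ near $f$; then a measure with $\lambda_c(\mu_k,g_k)$ slightly negative still has pressure close to $P^+$ by a perturbation/continuity estimate on the entropy-plus-integral functional, contradicting $P_{\mu_k}(g_k,\phi)\geq P(g_k,\phi)$.

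The main obstacle is the semicontinuity of metric entropy along a \emph{varying} sequence of dynamical systems $g_k\to f$: upper semicontinuity of $\mu\mapsto h_\mu(g)$ for fixed $g$ follows from $h$-expansivity (valid here since the center is one-dimensional, by \cite{CY05, DFPV12, LVY13}), but one needs a version uniform enough to conclude $\limsup_k h_{\mu_k}(g_k)\leq h_\mu(f)$ when both the measure and the map move. I expect to handle this by invoking that the $h$-expansivity constant and the entropy-expansiveness bound can be taken uniform on a $C^1$-neighborhood of $f$ (a standard consequence of the robustness of the dominated splitting and of the Liao–Pliss / Bowen-ball estimates used to prove $h$-expansivity for systems with one-dimensional center), which upgrades the fixed-map semicontinuity to a joint semicontinuity statement; alternatively, one bypasses the issue entirely by working with the Katok-horseshoe approximation from the proof of Lemma \ref{unstable-pressure-is-continues} and the structural stability of hyperbolic sets, so that only the \emph{upper} semicontinuity of the pressure (which is the easy $C^1$-semicontinuity direction) is needed. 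Once the limiting measure $\mu$ is produced with $\lambda_c(\mu,f)\geq 0$ and full pressure, the contradiction with Lemma \ref{mme-implies-center-lyapunov-exponent<0} is immediate, and choosing $r$ and $\cV'$ is just extracting them from the failed-sequence argument.
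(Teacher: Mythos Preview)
Your proposal is correct and follows the same skeleton as the paper: contradiction, sequences $g_k\to f$, $r_k\to 0$, equilibrium states $\nu_k$ with $\lambda_c(\nu_k,g_k)\geq -r_k$, pass to a weak-$*$ limit $\nu$, check $\lambda_c(\nu,f)\geq 0$, conclude $\nu$ is an equilibrium state for $(f,\phi)$, and contradict Lemma~\ref{mme-implies-center-lyapunov-exponent<0}.

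The one substantive difference is in the semicontinuity step. You propose joint upper semicontinuity of the \emph{total} metric entropy $(g,\mu)\mapsto h_\mu(g)$, justified by uniform $h$-expansivity on a $C^1$-neighborhood via \cite{LVY13}. The paper instead uses \cite[Theorem~A]{yang2016} on joint upper semicontinuity of the \emph{unstable} pressure $(g,\mu)\mapsto P^u_\mu(g,\phi)$: since $\nu_k$ is an equilibrium state for $g_k\in\widetilde{\cV}$, every ergodic component of $\nu_k$ has negative center exponent by Lemma~\ref{mme-implies-center-lyapunov-exponent<0}, whence $P_{\nu_k}(g_k,\phi)=P^u_{\nu_k}(g_k,\phi)$; then Yang's result plus Corollary~\ref{pressure-continua} give
\[
P(f,\phi)=\lim_k P(g_k,\phi)=\lim_k P^u_{\nu_k}(g_k,\phi)\leq P^u_\nu(f,\phi)\leq P_\nu(f,\phi).
\]
Both routes are valid; the paper's is slightly more self-contained because the required semicontinuity statement is exactly the one already invoked in the proof of Lemma~\ref{unstable-pressure-is-continues}, whereas the joint version for total entropy, while true in this setting, needs an extra argument beyond per-map $h$-expansivity. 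Your ``alternative, cleaner route'' at the end of the second paragraph is less convincing as written: establishing a uniform gap $P^+(g,\phi)\leq P(g,\phi)-\eta_0$ for $g$ near $f$ is essentially the $r=0$ case of the lemma itself, so it cannot be taken as an input.
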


\begin{proof}
  Suppose, by contradiction,  that there exist sequences $g_n\in \widetilde{\cV} \cap   \operatorname{Diff}^{1+}(M)$ and  $r_n>0$ with $r_n\to 0$ and $g_n \to f$ when $n\to \infty$   such that 
  $$\sup \{P_\mu(g_n,\phi): \mu\in \mathcal{M}_{g_n},\lambda_c({\mu},g_n)\geq -r_n\}=P(g_n,\phi).$$ 

     Since $\mu\mapsto \lambda_c(\mu,g)$ is a continuous map in the weak* topology, there is an invariant measure $\nu_n\in \mathcal{M}_g$ such that $P_{\nu_n}(g_n,\phi)= P(g_n,\phi)$ and $\lambda(\nu_n,g_n)\geq -r_n$.

     Since $\mathcal{M}(M)$ is compact, we can suppose that $\nu_n$ converges to $\nu$ and so $\nu$ is a  $f-$invariant and satisfies $\lambda_c(f,\nu)\geq 0$. Therefore, by Corollary \ref{pressure-continua}  and   \cite[Theorem A]{yang2016} we obtain
     $$
     P(f,\phi)=\lim_{n} P(g_n,\phi)=\lim_{n} P_{\nu_n}^u(g_n)\leq P^u_{\nu}(f)\leq P^u_\nu(f,\phi),
     $$
   implying that $\nu$ is an equilibrium state with $\lambda_c(f,v)\geq 0$, which is a contradiction by  Lemma \ref{mme-implies-center-lyapunov-exponent<0}.
\end{proof}

The proof of the corollary below is similar to the proof of Lemma \ref{bad-UH-small-pressure}
and so it will omitted.

\begin{corollary}\label{existence of a and V}
    Consider $r>0$ and ${\cV}'\subset   \operatorname{Diff}^{1}(M)$ as in  Lemma \ref{bad-UH-small-pressure}. There exist $a>0$ and a $C^1$ neighborhood $\mathcal{V}\subset \mathcal{V}'$ of $f$ such that, for every $g\in {\cV}\cap   \operatorname{Diff}^{1+}(M)$
\begin{equation}\label{hPP<Pto}
    \sup \{P_\mu(g,\phi): \mu\in \mathcal{M}_g,\lambda_c(\mu,g)\geq -r\}<a<P(g,\phi).
\end{equation}
\end{corollary}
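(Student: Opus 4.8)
The plan is to upgrade the pointwise strict inequality of Lemma~\ref{bad-UH-small-pressure} to one with a \emph{uniform} intermediate constant $a$, using the continuity of $g\mapsto P(g,\phi)$ at $f$ provided by Corollary~\ref{pressure-continua}. Write $Q(g):=\sup\{P_\mu(g,\phi):\mu\in\cM_g,\ \lambda_c(\mu,g)\ge -r\}$. Since $f$ is $C^{1+}$ it lies in $\cV'\cap\operatorname{Diff}^{1+}(M)$, so Lemma~\ref{bad-UH-small-pressure} gives $Q(f)<P(f,\phi)$; fix any $a$ with $Q(f)<a<P(f,\phi)$ (such an $a$ is positive because, by monotonicity of the pressure and the standing assumption \eqref{gap-condition}, $P(f,\phi)\ge h_{top}(f)=h^u(f)>0$). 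By Corollary~\ref{pressure-continua} there is a $C^1$ neighborhood $\cV_1\subset\cV'$ of $f$ with $P(g,\phi)>a$ for all $g\in\cV_1\cap\operatorname{Diff}^{1+}(M)$, which is the right-hand inequality of the statement.

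It then remains to find a $C^1$ neighborhood $\cV_2\subset\cV_1$ on which $Q(g)<a$, and here I would argue by contradiction, exactly as in the proof of Lemma~\ref{bad-UH-small-pressure}. If no such $\cV_2$ existed there would be $g_n\to f$ with $g_n\in\cV_1\cap\operatorname{Diff}^{1+}(M)$ and $Q(g_n)\ge a$. Each $g_n$ is partially hyperbolic with one-dimensional center (partial hyperbolicity being $C^1$-open), hence entropy-expansive \cite{CY05,DFPV12,LVY13}, so $\mu\mapsto P_\mu(g_n,\phi)=h_\mu(g_n)+\int\phi\,d\mu$ is upper semicontinuous on the weak$^*$-compact set $\{\mu\in\cM_{g_n}:\lambda_c(\mu,g_n)\ge -r\}$ (the constraint map $\mu\mapsto\lambda_c(\mu,g_n)=\int\varphi^c_{g_n}\,d\mu$ being weak$^*$-continuous, where $\varphi^c_g(x)=\log\|Dg_{|E^c_g(x)}\|$). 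Thus the supremum defining $Q(g_n)$ is attained at some $\nu_n$ with $\lambda_c(\nu_n,g_n)\ge -r$ and $P_{\nu_n}(g_n,\phi)=Q(g_n)\ge a$. Passing to a subsequence, $\nu_n\to\nu$ weak$^*$; then $\nu\in\cM_f$, and since $g_n\to f$ in $C^1$ the center bundle, and hence $\varphi^c_{g_n}$, converge uniformly to $\varphi^c_f$, so $\lambda_c(\nu,f)=\lim_n\int\varphi^c_{g_n}\,d\nu_n\ge -r$, while also $\int\phi\,d\nu_n\to\int\phi\,d\nu$.

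The decisive point is the inequality $\limsup_n h_{\nu_n}(g_n)\le h_\nu(f)$, for which I would invoke that the entropy-expansivity scale can be taken uniform over a $C^1$ neighborhood of $f$ (again \cite{CY05,DFPV12,LVY13}), which makes $(g,\mu)\mapsto h_\mu(g)$ upper semicontinuous at $(f,\nu)$. Granting it, $a\le\limsup_n P_{\nu_n}(g_n,\phi)\le h_\nu(f)+\int\phi\,d\nu=P_\nu(f,\phi)$, while $\nu\in\cM_f$ with $\lambda_c(\nu,f)\ge -r$ forces $P_\nu(f,\phi)\le Q(f)<a$ by the very definition of $Q(f)$, a contradiction. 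Hence the required $\cV_2$ exists and $\cV:=\cV_2$ satisfies $Q(g)<a<P(g,\phi)$ for every $g\in\cV\cap\operatorname{Diff}^{1+}(M)$.

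I expect the only genuine obstacle to be this last upper-semicontinuity step: one must know that the entropy-expansivity constant of partially hyperbolic diffeomorphisms with one-dimensional center can be chosen uniform on a $C^1$ neighborhood of $f$, so that metric entropy is jointly upper semicontinuous under $C^1$-perturbation. Everything else mirrors the weak$^*$-compactness and continuity argument already used for Lemma~\ref{bad-UH-small-pressure}; in particular one cannot here simply replace $h_\mu$ by the unstable entropy $h^u_\mu$ (whose upper semicontinuity from \cite{yang2016} is what that lemma uses), since the near-optimal measures $\nu_n$ need not be equilibrium states and so may fail to have negative center exponent.
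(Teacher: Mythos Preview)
Your argument is correct and follows the contradiction-plus-weak$^*$-compactness skeleton the paper points to (the paper omits the proof entirely, saying only that it is ``similar'' to that of Lemma~\ref{bad-UH-small-pressure}). You are also right that the specific upper-semicontinuity tool used in that lemma --- Yang's result on $h^u_\mu$ from \cite{yang2016}, which applied there because the approximating $\nu_n$ were genuine equilibrium states and hence satisfied $h_{\nu_n}=h^u_{\nu_n}$ --- does not carry over here, since your $\nu_n$ only satisfy $P_{\nu_n}(g_n,\phi)\ge a$ rather than $=P(g_n,\phi)$; your substitute, the joint upper semicontinuity of $(g,\mu)\mapsto h_\mu(g)$ coming from a uniform entropy-expansivity scale on a $C^1$ neighborhood \cite{LVY13}, is valid and is exactly what the paper itself invokes later in Proposition~\ref{obstrucao-de-expansividade-pequena}.
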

%\begin{proof}
%Suppose, by contradiction,  that there exists sequences $g_n\in {\cV}'\cap   \operatorname{Diff}^{1+}(M)$ such that $g_n \to f$ when $n\to \infty$   and 
%  $$P(g_n,\phi)-\sup \{P_\mu(g_n,\phi): \mu\in \mathcal{M}_{g_n},\lambda_c(\textcolor{magenta}{\mu},g_n)\geq -r\}\leq\frac{1}{n}.$$ 
%
%Since $\mu\mapsto \lambda_c(\lambda,g)$ is a continuous map in the weak* topology, there is an invariant measure $\nu_n\in \mathcal{M}_g$ such that $P_{\nu_n}(g_n,\phi)= P(g_n,\phi)$ and $\lambda(g_n,\nu_n)\geq -r$.
%
%As $\mathcal{M}(M)$ is compact, we can suppose that $\nu_n$ converges to $\nu$ and so $\nu$ is a  $f-$invariant and satisfies $\lambda_c(f,\nu)\geq -r$. Therefore, by Corollary \ref{pressure-continua}  and   \cite[Theorem A]{yang2016} we obtain
%     $$
%     P(f,\phi)=\lim_{n} P(g_n,\phi)=\lim_{n} P_{\nu_n}^u(g_n)\leq P^u_{\nu}(f)\leq P^u_\nu(f,\phi),
%     $$
%implying that $\nu$ is an equilibrium state with $\lambda_c(f,v)\geq -r$, which is a contradiction 
%by  Lemma \ref{bad-UH-small-pressure}.
%\end{proof}
%\vspace{0.2cm}

Fix  $r>0$, $a>0$ and $\mathcal{V}\subset   \operatorname{Diff}^{1}(M)$ as in Corollary \ref{existence of a and V}.
Next we   define the decomposition $(\cP_g, \cG_g, \cS_g)$ for each
 $g \in \mathcal{V} \cap   \operatorname{Diff}^{1+}(M)$ as follows. 
 We put $\cS_g=\emptyset$, and define $\cP_g$ and $\cG_g$ as 

 \begin{equation}\label{def-Pg}
     \cP_g:=\{(x,n)\in M\times \mathbb{N}:S_n\varphi^c(x)\geq -rn\}
 \end{equation}
 and
 \begin{equation*}
     \cG_g:=\{(x,n)\in M\times \mathbb{N}: S_n\varphi^c(x)< -rn, \, \forall \, 0\leq j \leq n\}, \,\,\, S_n\varphi^c(x)=\sum_{k=0}^{n-1}\varphi^c(g^k(x)).
 \end{equation*}
 
 Take an arbitrary orbit segment $(x,n)\in M\times \mathbb{N}$ and let $\hat{p}=\hat{p}(x,n)$ be the maximal integer with the property that $(x,\hat{p})\in \mathcal{P}_g$ and $\hat{g}=\hat{g}(x,n)=n-\hat{p}$.

\subsection{ Specification on $\cG_g$}

To prove that each diffeomorphism $g$ close to
$f$ has specification in $\cG_g$ we will use the follow theorem.

\begin{theorem}\label{Manifolds for hyperbolic times}
    There exist a $C^1$ neighborhood $\cV$ of $f$, positive constants $R_{cs} = R_{cs}(f)$, $C = C(f)$, and $\tilde{\varepsilon} > 0$ such that if $g \in \cV$ and $(x,n) \in \cG_g$, the following holds:
    \begin{itemize}
        \item[i)] There exists a $C^1$ embedded disk $N^{cs}_{R_{cs}}(g,x)$ of dimension $\operatorname{dim}(E^{s}) + 1$ and radius $R_{cs} > 0$, centered at $x$, such that
        $$T_x N^{cs}_{R_{cs}}(g,x) = E^{cs}_g(x);$$
        \item[ii)] $N^{cs}_{R_{cs}}(g,x)$ depends continuously on both $x$ and $g$ in the $C^1$ topology;
        \item[iii)] if $y \in N^{cs}_{R_{cs}}(g,x)$, then 
        $$d(f^k(x), f^k(y))< Ce^{-kr/2}d(x,y) \mbox{ for every } 0\leq k\leq  n$$
        \item[iv)] If $y\in M$ with $d(x,y)<\hat{\varepsilon}$  then
        $$N^{cs}_{R_{cs}}(g,x)\cap W^u_{loc}(y)=\{z\},$$
        where $z\in M$ and $W^u_{loc}$ is the connected component contenning $x$ of $B(x,loc)\cap W^u(x)$ and $loc$ is a positive number. 
        \end{itemize}
\end{theorem}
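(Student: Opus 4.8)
The plan is to recognize that the condition $(x,n)\in\cG_g$, namely $S_j\varphi^c(x)<-rj$ for all $0\le j\le n$, is exactly a \emph{hyperbolic-times} condition along the center-stable direction: the orbit segment contracts uniformly (with rate $r$) along $E^c$, and contracts at the uniform rate $\log\xi$ along $E^s$ because $g\in\widetilde\cV$. Hence $E^{cs}_g=E^s_g\oplus E^c_g$ behaves like a uniformly contracting bundle along the piece of orbit $x,g(x),\dots,g^n(x)$. The standard graph-transform / Hadamard--Perron construction for hyperbolic times (as in Alves--Bonatti--Viana type arguments, or the version used in \cite{climThom2021}) then produces the embedded disk. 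Concretely, first I would fix a $C^1$ neighborhood $\cV$ small enough that the cone field around $E^{cs}_g$ is forward-invariant under $g^{-1}$ restricted to such orbit segments and that the contraction estimates survive a uniform perturbation; the constants $R_{cs}$, $C$, $\tilde\varepsilon$ are then extracted from the size of these cones and the contraction rates, and depend only on $f$ (through $\xi$, $r$, and bounds on $Dg$, $Dg^{-1}$ over $\cV$), not on $g$ itself.

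For item (i) and (iii): working in local charts, represent candidate center-stable disks through $x$ as graphs of $C^1$ maps from $E^{cs}_g(x)$ to $E^u_g(x)$ with small slope; apply the inverse graph transform associated to pulling back along $g^{-1}$ over the orbit segment. Because the orbit spends all times $0\le j\le n$ in the contracting regime, the graph transform is a uniform contraction on this space of graphs over exactly $n$ steps, giving a well-defined disk $N^{cs}_{R_{cs}}(g,x)$ with $T_xN^{cs}_{R_{cs}}(g,x)=E^{cs}_g(x)$ and radius bounded below by a uniform $R_{cs}$. The contraction estimate $d(g^k x,g^k y)<Ce^{-kr/2}d(x,y)$ for $0\le k\le n$ then follows by the usual telescoping of the derivative estimates along the orbit, with the factor $r/2$ absorbing the distortion coming from the $C^{1+}$ (rather than linear) nature of $g$ and the slope of the graph; here one uses that $S_j\varphi^c(x)<-rj$ controls the center contraction and $\|Dg_{E^s}\|<\xi<1$ the stable one, and $\xi<e^{-r/2}$ after shrinking $r$ if necessary. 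Item (ii), continuous dependence on $x$ and $g$, is automatic from the fixed-point-of-a-uniform-contraction construction, since the graph transform depends continuously (in $C^1$) on the orbit data, which in turn depends continuously on $(x,g)$.

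For item (iv): this is the transversality/local product structure statement. Since $T_xN^{cs}_{R_{cs}}(g,x)=E^{cs}_g(x)$ and $T_xW^u_{loc}(x)=E^u_g(x)$ are transverse and complementary, by continuity of these bundles and of the disks there is a uniform $\hat\varepsilon>0$ so that for any $y$ with $d(x,y)<\hat\varepsilon$ the disk $N^{cs}_{R_{cs}}(g,x)$ and the local unstable manifold $W^u_{loc}(y)$ meet transversally in exactly one point $z$; this is the standard fact that two continuous families of transverse disks of complementary dimension, of uniform size, have a single intersection point depending continuously on the data, and it holds uniformly over $\cV$ after one more shrinking. The main obstacle is bookkeeping the uniformity: one must check that every constant ($R_{cs}$, $C$, $\tilde\varepsilon$, the cone widths, the domain of the graph transform) can be chosen depending only on $f$ and valid for all $g$ in a single $C^1$ neighborhood and all orbit segments in $\cG_g$ simultaneously — in particular that the $C^{1+}$ regularity is used only to get the distortion bound feeding into (iii) and is not needed for the disk to exist, so that the disks are defined for all $C^1$ diffeomorphisms in $\cV$. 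This is where I would be most careful, following the quantitative hyperbolic-times estimates in \cite{climThom2021} and adapting them to the center-stable bundle.
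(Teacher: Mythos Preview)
Your plan is correct and would succeed, but the paper takes a somewhat different and shorter route. Instead of running a graph transform over the finite orbit segment $(x,n)$ (which, as you implicitly acknowledge, has no genuine fixed point after only $n$ steps and therefore requires a canonical seed disk at $g^n(x)$, with the resulting disk a priori depending on $n$), the paper simply invokes the Hirsch--Pugh--Shub plaque-family theorem for the dominated splitting $E^{cs}_g\oplus E^u_g$: this produces, for every $g$ in a $C^1$ neighborhood of $f$ and every $x\in M$, a locally invariant $C^1$ embedded disk $V_g(x)=\Phi_{g,x}(\mathbb{R}^{\dim E^s+1})$ tangent to $E^{cs}_g(x)$, varying continuously in $(g,x)$. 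The disk $N^{cs}_{R_{cs}}(g,x)$ is then just the radius-$R_{cs}$ ball in this plaque. Items (i), (ii) and (iv) follow immediately from HPS and transversality, with no reference to $\cG_g$ whatsoever; the hyperbolic-times hypothesis $(x,n)\in\cG_g$ enters only for item (iii), where uniform continuity of $y\mapsto\log\|Dg_{|E^{cs}(g,y)}\|$ transfers the Birkhoff-sum bound from $x$ to nearby points on the (locally invariant) plaque, giving the contraction estimate.

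What each approach buys: the paper's route gives $n$-independent disks with continuous dependence for free, and avoids all bookkeeping about cones, seed disks, and uniformity of the graph-transform contraction. Your route is self-contained and closer in spirit to the Alves--Bonatti--Viana hyperbolic-times machinery, but you would still need to justify continuous dependence (ii) carefully, since the pulled-back disk depends on the seed and on $n$; the cleanest fix is exactly to replace that construction by the HPS plaque, which is what the paper does.
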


The proof of Theorem \ref{Manifolds for hyperbolic times} follows a similar approach to \cite[Theorem 3.5]{AV20}, with the key distinction that we do not rely on invariant manifolds but instead on manifolds exhibiting contraction over a finite number of iterates. Below, we summarize the key steps of the proof.

\begin{comment}
    \begin{theorem}[\cite{HPS77}]\label{HPS-Theorem}
 Suppose that a diffeomorphism $f: M \to M$ has a dominated splitting $F \prec G$. Then there exist a $C^1$ neighborhood $U$ of $f$, a number $\rho > 0$, and a continuous family of embeddings
$$
U \times M \in (g, x) \mapsto \Phi_{g,x} \in \text{Emb}^1(\mathbb{R}^{\dim(F)}, M)
$$
such that for every $g \in U$ and every $x \in M$ we have
\begin{itemize}
    \item $\text{D}\Phi_{g,x}(0) = F_x(g),$ and
    \item $g(\Phi_{g,x}(B_\rho(0))) \subset \Phi_{g,g(x)}(\mathbb{R}^{\dim(F)}).$
\end{itemize}

\end{theorem}

\end{comment}
%\textcolor{red}{
\begin{proof}[Proof of theorem \ref{Manifolds for hyperbolic times}] By %the classical theorem By hirsch Pugh Shub, 
{\cite{HPS77} there are a $C^1$ neighborhood $\cV$ of $f$, a number $\rho > 0$, and a continuous family of embeddings}
$$
{\cV \times M \in (g, x) \mapsto \Phi_{g,x} \in \text{Emb}^1(\mathbb{R}^{\dim(E^s)+1}, M)}
$$
{such that for every $g \in \cV$ and every $x \in M$ we have}
\begin{itemize}
    \item $T_x\Phi_{g,x}(\bR^{\dim (E^c)+1}) = E^s_x(g),$ and
    \item $g(\Phi_{g,x}(B_\rho(0))) \subset \Phi_{g,g(x)}(\mathbb{R}^{\dim(E^s)+1}).$
\end{itemize}
%}
    
    %$\text{D}\Phi_{g,x}(0) = E_x^{cs}(g),$ and $g(\Phi_{g,x}(B_\rho(0)))\subset g(\Phi_{g,g(x)}(\mathbb{R}^{\operatorname{dim}(E^s)+1}))$.
    
%\textcolor{red}{
{Consider  $V_g(x)=\Phi_{g,x}(\mathbb{R}^{\operatorname{dim}(E^s)+1})$ and for $c>0$  define $D(x,c,g)=\{y\in V_g(x): d_{V_g(x)}(x,y)<c\}$, where $d_{V_g(x)}$ denote the distance given by the Rimannian structure in $V_g(x)$.}

{Consider $r>0$ as in Corollary \ref{existence of a and V}. 
By the uniform continuity of $\log \|Dg_{|E^{cs}(g,x)}\|$,  there exists $c_1>0$ independent of $g$  such that 
}

$${\|dg_{|E^{cs}(g,x)}\|<e^{-r/2}\|dg_{|E^{cs}(g,y)}\|}$$
    for every $g\in \cV$ and $x,y\in M$ with $d(x,y)<c_1.$

Therefore, if we choose $R_{cs}>0$ sufficiently small  we have that   for every $g\in \cV$ and $(x,n)\in \cG_g$, if $y\in D_g(x,R_{cs})$ then

$${d_{V_g(g^k(x))}(g^kx,k^ky)\leq e^{-rk/2}d_{V_g(x)}(x,y)\,\,  \mbox{ for } 0\leq k<n}.$$
{
Thus, for any $g \in \cV$ and $x \in \cG_g$, the manifold $N^{cs}_x := D(x, R_{cs},g)$ satisfies items $(i)$ and $(ii)$ of the theorem. 
By choosing $R_{cs} > 0$ sufficiently small and, if necessary, reducing the size of   $\cV$,  we can also ensure conditions items $(iii)$ and $(iv)$. This concludes the proof of the theorem.}

\end{proof}

\begin{proposition}\label{g has specification}
Let $\cV$ be a $C^{1+}$ neighborhood of $f$ as Corollary \ref{existence of a and V} and Theorem \ref{Manifolds for hyperbolic times}. Suppose that $g\in \mathcal{V} \cap   \operatorname{Diff}^{1+}(M) $ has unstable foliation $\varepsilon$-minimal. 
   If $\varepsilon>0$ is sufficiently small, then $\cG_g$ has specification at scale $\epsilon(1+K)$ where $K$ is a positive constant.
\end{proposition}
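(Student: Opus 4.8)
The plan is: given $(x_i,n_i)_{i=1}^k\subset\cG_g$, to build a point $y$ whose orbit $\delta$-shadows their concatenation with all gluing times equal to a uniform $\tau=\tau(g)$, where $\delta:=\varepsilon(1+K)$ and $K=K(f)$. Two mechanisms are combined. For each segment $(x_i,n_i)\in\cG_g$ the center--stable disk $N^{cs}_{R_{cs}}(g,x_i)$ of Theorem~\ref{Manifolds for hyperbolic times} is the shadowing device (part (iii): it contracts along the first $n_i$ iterates; part (iv): it meets a nearby local unstable plaque transversally in a single point). The $\varepsilon$-minimality of $\cF^u_g$ together with the uniform, robust expansion along $E^u$ lets the orbit travel from the end of one segment to the start of the next in boundedly many steps.

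First I would fix constants. Robustness of partial hyperbolicity on $\cV$ gives a uniform local unstable plaque size $\rho_0>0$ and a uniform expansion rate $\lambda_u>1$ along $E^u$ with bounded distortion; let $R=R(g)$ be the $\varepsilon$-minimality constant of $\cF^u_g$. From the continuity and $E^{cs}_g$-tangency of the family $N^{cs}_{R_{cs}}(g,\cdot)$ (parts (i)--(ii)) and the uniform transversality of $E^{cs}_g$ and $E^u_g$ one gets $\kappa=\kappa(f)>0$ such that whenever $(x,n)\in\cG_g$ and a local plaque $W^u_{loc}(p)$ meets the $\hat\varepsilon$-ball of $x$, the point $z:=N^{cs}_{R_{cs}}(g,x)\cap W^u_{loc}(p)$ of part (iv) satisfies $d(z,x)\le\kappa\,\dist(W^u_{loc}(p),x)$. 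Then take $K=K(f)$ of order $\kappa\max\{1,Ce^{-r/2}\}$, set $\delta:=\varepsilon(1+K)$, and pick $\tau\in\bN$ large enough that $g^{\tau}$ sends any unstable disk of internal radius $\ge c_0\delta$ (for a fixed $c_0=c_0(f)>0$ absorbing the distortion) onto one that is $\varepsilon$-dense with a definite $2\rho_0$-collar inside it; $\varepsilon$ is assumed small enough for all smallness requirements.

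The heart is the following \emph{one-connection} step, to be iterated: if $U$ is an unstable disk as produced above and $(x,n)\in\cG_g$, then, $U$ being $\varepsilon$-dense, it contains a local plaque $W^u_{loc}(p)\subset U$ with $\dist(W^u_{loc}(p),x)<\varepsilon$, so by part (iv) the point $\tilde z:=N^{cs}_{R_{cs}}(g,x)\cap W^u_{loc}(p)$ lies in $U$, has $d(\tilde z,x)\le\kappa\varepsilon$, has $W^u_{loc}(\tilde z)\subset U$, and by part (iii) satisfies $d_n(\tilde z,x)\le\delta$. The construction then runs in two passes. \emph{Forward pass:} start with $V_1:=B_{n_1}(x_1,\delta)$, which contains a $W^u$-interval of radius $\sim\delta\lambda_u^{-n_1}$ through $x_1$; inductively, given that $U_{j-1}:=g^{n_{j-1}+\tau}(V_{j-1})$ is a large unstable disk, apply the one-connection step at $x_j$ to get $\tilde z_j$, and set $V_j:=U_{j-1}\cap B_{n_j}(x_j,\delta)$. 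This is where $(x_j,n_j)\in\cG_g$ is used: part (iii) forces $\tilde z_j$ and the nearby $E^u$-plaque points to $\delta$-shadow $(x_j,n_j)$ despite the $E^u$-expansion, so $V_j$ still contains the $W^u$-interval of radius $\sim\delta\lambda_u^{-n_j}$ around $\tilde z_j$ lying inside $W^u_{loc}(\tilde z_j)\subset U_{j-1}$; being $E^u$-tangent, that interval is blown up through the $n_j+\tau$ iterates back above the threshold $c_0\delta$, so $U_j:=g^{n_j+\tau}(V_j)$ is again $\varepsilon$-dense and the induction proceeds. \emph{Backward pass:} put $Z_k:=V_k$ and $Z_j:=\{p\in V_j:g^{n_j+\tau}(p)\in Z_{j+1}\}$; since $g^{n_j+\tau}(V_j)=U_j\supseteq Z_{j+1}$ all the $Z_j$ are nonempty, and any $y\in Z_1$ satisfies $g^{N_{j-1}+\tau_{j-1}}(y)\in V_j\subset B_{n_j}(x_j,\delta)$ for every $j$ (with $\tau_i\equiv\tau$), which is precisely the specification property for $\cG_g$ at scale $\delta=\varepsilon(1+K)$.

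The main obstacle is the bookkeeping of the forward pass: one must track the internal radii through the successive intersections (the bounded-distortion factor and the loss from intersecting with $B_{n_j}(x_j,\delta)$ have to be absorbed into $c_0$ and $\tau$), check that $\tilde z_j$ sits far enough inside $U_{j-1}$ that $W^u_{loc}(\tilde z_j)\subset U_{j-1}$, and verify that the final scale really closes up at $\varepsilon(1+K)$ once the $\kappa\varepsilon$ from part (iv) is composed with the contraction constant of part (iii). Conceptually, Theorem~\ref{Manifolds for hyperbolic times} supplies a genuine contracting (``stable'') object only along segments of $\cG_g$, so along the concatenation the center--stable directions never accumulate expansion while the $E^u$-direction reliably spreads out; orbit segments outside $\cG_g$ admit no such device, which is why the decomposition isolates them in $\cP_g$.
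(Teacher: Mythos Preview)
Your proposal uses exactly the two ingredients the paper uses---the contracting center--stable disks $N^{cs}_{R_{cs}}(g,\cdot)$ from Theorem~\ref{Manifolds for hyperbolic times} and the $\varepsilon$-minimality of $\cF^u_g$---so conceptually there is no difference. The organization, however, diverges. The paper only treats the two-segment case explicitly: it takes an unstable disk of radius $R_u/2$ centered at $g^{n_1+M_u}(x_1)$, picks $\hat y$ in its intersection with $N^{cs}(g,x_2)$, records the exponential estimates $d(g^j x_1,g^j y)\le\varepsilon\xi^{n_1-j}$ (from backward contraction along $W^u$) and $d(g^k\hat y,g^kx_2)\le Ce^{-kr/2}d(\hat y,x_2)$ (from item~(iii)), and then invokes \cite[Lemma~5.10]{climThom2021} to pass from two segments with such geometric control to the full $k$-segment specification at scale $\varepsilon K$. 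Your forward/backward pass is a direct execution of that induction, tracking the small $W^u$-plaques explicitly; it is more self-contained but correspondingly heavier, whereas the paper outsources the bookkeeping to an existing lemma.

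One wording slip to fix: you set $V_1:=B_{n_1}(x_1,\delta)$ and then claim $U_{j-1}:=g^{n_{j-1}+\tau}(V_{j-1})$ is ``a large unstable disk''. As written it is not---$V_1$ is a full Bowen ball in $M$. What you are really iterating is the chain of small unstable plaques $W_j\subset V_j$ (of radius $\sim\delta\lambda_u^{-n_j}$ around $\tilde z_j$) whose forward images $g^{n_j+\tau}(W_j)$ are the large $\varepsilon$-dense unstable disks; the sets $V_j$ themselves merely contain them. With that clarification (which you essentially flag in your ``main obstacle'' paragraph) the argument goes through.
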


\begin{proof}
    Let $0<\varepsilon<\hat{\varepsilon}$, where $\hat{\varepsilon}$ is as in Theorem \ref{Manifolds for hyperbolic times}, item $(iv)$, and $(x_1,n_1),(x_2,n_2)\in \cG_g$. 
    As $\mathcal{F}^u$ is $\varepsilon$-minimal, there exists $R_u>0$  such that if $D$ is a disk in an unstable manifold of $g$ with an internal radius larger than $\frac{R_u}{2}$ then $D \cap B_\varepsilon(x)\neq \emptyset$ for every $x\in M$.
    
    For $R_u>0$, we fix $M_u$ such that for every unstable disk with a center in $z\in M$ an internal radius less than $R_u$, if $y\in D$ then $d(g^{-M_u}(z),g^{-M_u}(y))\leq \varepsilon$.

    Let $D\subset W^u(g^{n_1+M_u}(x_1))$ a disk with an internal radius larger than $R_u/2$ centred in $g^{n_1+M_u}(x_1)$. 
    By $\varepsilon$-minimality  
    $D\cap B_\varepsilon(x_2)\neq \emptyset$, and by item $(iv)$ of Theorem \ref{Manifolds for hyperbolic times} we can choose  $\hat{y}\in D\cap N^{cs}(g,x_2)$. 
    
    Since $\hat{y}\in D$, we have that
    $$d(g^{-M_u}(\hat{y}),g^{n_1}(x_1))\leq \varepsilon .$$ 
    Analogously, 
    $$d(g^{n_1-k}(x_1),g^{-M_u-k}(\hat{y}))\leq \varepsilon\xi^k \mbox{ for every } k\geq 0.$$ 
    Therefore, $$d(g^j(x_1),g^j(g^{-M_u-n_1}(\hat{y})))\leq \varepsilon\xi^{n_1-j} \text{ for every }0\leq j\leq n_1.$$

    Moreover, as $g^{M_u}(\hat{y})\in N^{cs}_{x_2}$, $$d_{n_2}(g^{M^ u}(\hat{y}), x_2)\leq \varepsilon.$$ Therefore, by \cite[Lemma 5.10]{climThom2021} choosing $\tau=M_u$, we get that $\mathcal{G}_g$ has specification  at scale $\varepsilon K$ where $K$ is a positive constant that can be choosing uniformly in $g$.
\end{proof}
\begin{corollary}\label{specification small}

Given  $\delta>0$ there exists  a $C^1$ neighborhood $\cV$ of $f$,  such that if $g\in \mathcal{V}$ then $g$ has specification on $\cG_g$ at scale $\delta$.
\end{corollary}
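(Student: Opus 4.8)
The plan is to deduce Corollary~\ref{specification small} directly from Proposition~\ref{g has specification} together with the standard robustness of minimality of the unstable foliation recorded right after Definition~\ref{defi-epsilon-minimality}. First I would fix $\delta>0$. By Proposition~\ref{g has specification}, applied with a parameter $\varepsilon>0$ (to be chosen), every $g$ in the neighborhood $\cV$ coming from Corollary~\ref{existence of a and V} and Theorem~\ref{Manifolds for hyperbolic times} that has $\varepsilon$-minimal unstable foliation enjoys specification on $\cG_g$ at scale $\varepsilon(1+K)$, where $K>0$ is the constant produced there, \emph{uniform in $g$}. The key point is that $K$ does not depend on $\varepsilon$ (it only depends on $f$ and on the uniform domination constant $\xi$), so I may now choose $\varepsilon>0$ small enough that simultaneously $\varepsilon<\hat\varepsilon$ (the threshold from Theorem~\ref{Manifolds for hyperbolic times}(iv)) and $\varepsilon(1+K)\le\delta$. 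Since scale-monotonicity of the specification property gives specification at every scale $\ge$ the one already obtained, this yields specification at scale $\delta$.

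Next I would address the hypothesis ``$g$ has $\varepsilon$-minimal unstable foliation'' so that it is not an extra assumption but automatic on a neighborhood. Here I invoke the remark following Definition~\ref{defi-epsilon-minimality}: since $f$ itself has minimal unstable foliation (the standing hypothesis of Theorem~\ref{teo-main}), for the $\varepsilon>0$ fixed above there is a $C^1$ neighborhood $\cV_\varepsilon$ of $f$ such that every $g\in\cV_\varepsilon$ has $\varepsilon$-minimal unstable foliation. Intersecting, I set $\cV':=\cV\cap\cV_\varepsilon$, still a $C^1$ neighborhood of $f$. Then for every $g\in\cV'\cap\operatorname{Diff}^{1+}(M)$ Proposition~\ref{g has specification} applies, and by the scale choice above $\cG_g$ has specification at scale $\varepsilon(1+K)\le\delta$, hence at scale $\delta$.

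Finally I would note the mild cosmetic point that Corollary~\ref{specification small} is stated for ``$g\in\cV$'' without the $\operatorname{Diff}^{1+}$ qualifier; the honest statement is that it holds for all $C^{1+}$ diffeomorphisms $g$ in the neighborhood, which is all that is used downstream, so I would phrase the conclusion accordingly. I do not expect any serious obstacle here: the only thing to be careful about is that the constant $K$ in Proposition~\ref{g has specification} is genuinely independent of the minimality scale $\varepsilon$, which is exactly what is asserted there (``$K$ is a positive constant that can be chosen uniformly in $g$'') and which is what makes the choice $\varepsilon(1+K)\le\delta$ legitimate; once that is granted, the corollary is immediate from scale-monotonicity and robustness of minimality.
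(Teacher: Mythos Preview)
Your proposal is correct and follows essentially the same approach as the paper's own proof: choose $\varepsilon$ small enough that $\varepsilon(1+K)<\delta$, use minimality of $\cF^u_f$ to pass to a $C^1$ neighborhood on which every $g$ has $\varepsilon$-minimal unstable foliation, and apply Proposition~\ref{g has specification}. Your additional remarks on scale-monotonicity, the threshold $\hat\varepsilon$, and the $\operatorname{Diff}^{1+}$ qualifier are accurate clarifications, but the underlying argument is identical to the paper's.
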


\begin{proof}

Given $\delta>0$, $\varepsilon$ be sufficiently small such that $\varepsilon(1+ K)< \delta$, where $K$ is as in Proposition \ref{g has specification}.
 
  Since $\cF^u_f$ is minimal,  we can choose a $C^1$ neighborhood $\mathcal{V}$ of $f$ such that  $\cV$ is as in Proposition \ref{g has specification} and  if $g\in \cV$ then $g$ has unstable foliation  $\varepsilon$-minimal. 
  Therefore,  by Proposition \ref{g has specification} we conclude that every $g\in \cV$  has specification on $\cG_g$ at scale $\delta$.  
\end{proof}

\subsection{Obstruction of expansivity}

\begin{proposition}\label{obstrucao-de-expansividade-pequena} There exist $\varepsilon_0>0$ and $\cU$ neighborhood of $f$ such that if $g\in \cU$,
    $P^{\perp}(g,\phi,\varepsilon_1)<P(g,\phi)$.
\end{proposition}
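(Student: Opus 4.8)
The plan is to produce a single small scale $\varepsilon_1>0$, uniform over a $C^1$-neighborhood of $f$, with the following property: every ergodic $g$-invariant measure that charges the non-expansive set $\mathrm{NE}(\varepsilon_1)$ has center exponent $\lambda_c(\mu,g)\geq -r$, where $r>0$ is the constant from Corollary \ref{existence of a and V}. Once this is known, the proposition is immediate: by Definition \ref{def-obs-pressure} and the ergodic decomposition, $P^{\perp}_{\exp}(g,\phi,\varepsilon_1)$ is a supremum of $P_\mu(g,\phi)$ over ergodic $\mu$ with $\mu(\mathrm{NE}(\varepsilon_1))>0$, and each such measure satisfies $\lambda_c(\mu,g)\geq -r$, hence $P_\mu(g,\phi)<a<P(g,\phi)$ by \eqref{hPP<Pto}; taking the supremum (and noting the bound is vacuous if there are no such $\mu$) gives $P^{\perp}_{\exp}(g,\phi,\varepsilon_1)\leq a<P(g,\phi)$ for every $g$ in $\cU:=\cV\cap\cU_0$, where $\cV$ is as in Corollary \ref{existence of a and V} and $\cU_0$ is the neighborhood produced in the first step below.

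First I would confine non-expansivity to center plaques. Since $f$ is partially hyperbolic, by \cite{HPS77} there exist a $C^1$-neighborhood $\cU_0$ of $f$ and a uniform $\rho_0>0$ such that every $g\in\cU_0$ carries locally invariant plaque families of size $\rho_0$ tangent to $E^{cs}_g$, $E^{cu}_g$ and $E^c_g$; write $W^{cs}_{loc}(g,x)$, $W^{cu}_{loc}(g,x)$ and $W^c_{loc}(g,x)\subset W^{cs}_{loc}(g,x)\cap W^{cu}_{loc}(g,x)$. Using the uniform invariant cone fields, a standard argument gives $\varepsilon_1\in(0,\rho_0)$, independent of $g\in\cU_0$, such that the bi-infinite Bowen ball $\Gamma_{\varepsilon_1}(x)$ (computed for $g$) satisfies $\Gamma_{\varepsilon_1}(x)\subseteq W^c_{loc}(g,x)$ for all $x$: a point $y$ whose whole orbit stays $\varepsilon_1$-close to that of $x$ cannot have a nontrivial unstable component (forward iteration would then expand it beyond $\varepsilon_1$), nor a nontrivial stable component (by the same reasoning backward), so $y$ lies in the center plaque; and since these plaques are locally invariant and larger than $\varepsilon_1$, the whole orbit of such a $y$ stays in center plaques, i.e. $g^n y\in W^c_{loc}(g,g^n x)$ for all $n\in\bZ$.

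Next I would pin down the center exponent. Fix $g\in\cU_0\cap\cV\cap\operatorname{Diff}^{1+}(M)$ and let $\mu\in\mathcal{M}^e_g$ with $\mu(\mathrm{NE}(\varepsilon_1))>0$; as $\mathrm{NE}(\varepsilon_1)$ is $g$-invariant, $\mu(\mathrm{NE}(\varepsilon_1))=1$. For $\mu$-a.e.\ $x$ pick $y_x\neq x$ with $g^n y_x\in W^c_{loc}(g,g^n x)$ and $d(g^n x,g^n y_x)<\varepsilon_1$ for all $n$, and let $\ell_n>0$ be the length of the $E^c$-arc inside $W^c_{loc}(g,g^n x)$ joining $g^n x$ to $g^n y_x$. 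Because $\dim E^c=1$ and $x\mapsto\varphi^c(x)=\log\|Dg_{|E^c(x)}\|$ is $\alpha$-Hölder with constant $H$ uniform on $\cU_0$, bounded distortion along these arcs gives
$$\Big|\log\ell_n-\log\ell_0-\textstyle\sum_{k=0}^{n-1}\varphi^c(g^k x)\Big|\leq nH(2\varepsilon_1)^{\alpha},$$
and the analogous estimate for $n\leq 0$. Dividing by $|n|$, using $0<\ell_n<2\varepsilon_1$ and the Birkhoff theorem for $g$ and $g^{-1}$ (so that $\frac1n\sum_{k=0}^{n-1}\varphi^c(g^k x)$ and $\frac1n\sum_{k=1}^{n}\varphi^c(g^{-k}x)$ both converge $\mu$-a.e.\ to $\lambda_c(\mu,g)$), I get $|\lambda_c(\mu,g)|\leq H(2\varepsilon_1)^{\alpha}$. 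Shrinking $\varepsilon_1$ at the outset so that $H(2\varepsilon_1)^{\alpha}\leq r$ forces $\lambda_c(\mu,g)\geq -r$, which is what the reduction required.

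The main obstacle is the first step together with the distortion control: obtaining the single scale $\varepsilon_1$ confining $\Gamma_{\varepsilon_1}$ to one-dimensional center plaques \emph{uniformly} over the whole $C^1$-neighborhood (this rests on the uniformity of the invariant cone fields, plaque sizes and Hölder bounds from \cite{HPS77}), and then exploiting $\dim E^c=1$ to run the two-sided Birkhoff argument that traps $\lambda_c$ inside $[-r,r]$. Everything after that — the passage from $\lambda_c(\mu,g)\geq -r$ to $P_\mu(g,\phi)<a<P(g,\phi)$ and the supremum over ergodic measures — is a direct application of Corollary \ref{existence of a and V} and Definition \ref{def-obs-pressure}.
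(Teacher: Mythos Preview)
Your overall strategy---confining $\Gamma_{\varepsilon_1}(x)$ to a one-dimensional center arc and then bounding the center exponent of any non-almost-expansive ergodic measure by a quantity that can be made $\leq r$---is exactly the mechanism underlying the result the paper invokes, and your reduction to \eqref{hPP<Pto} via Definition~\ref{def-obs-pressure} is correct. There is, however, a genuine gap in the distortion step. You assert that $\varphi^c_g=\log\|Dg_{|E^c_g}\|$ is $\alpha$-H\"older with constant $H$ \emph{uniform over the $C^1$-neighborhood $\cU_0$}, and then choose $\varepsilon_1$ so that $H(2\varepsilon_1)^\alpha\leq r$. That uniformity is not available: on a $C^1$-neighborhood you have no control on the H\"older constant (or even the exponent) of $Dg$, so $\varphi^c_g$ need not be uniformly H\"older even though each individual $g$ is $C^{1+}$. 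The bundles $E^c_g$ are uniformly H\"older (this does come from the domination, cf.\ \cite{HPS77}), but that is not enough for the composite $x\mapsto\|Dg(x)|_{E^c_g(x)}\|$. Since your choice of $\varepsilon_1$ depends on $H$ and $\alpha$, without uniformity you lose the single scale working for all $g$.

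The repair is easy: replace H\"older control by equicontinuity. Because $g\mapsto Dg$ and $g\mapsto E^c_g$ are continuous in $C^0$ as $g$ varies in $C^1$, one has $\|\varphi^c_g-\varphi^c_f\|_{C^0}\to 0$ as $g\to f$; combined with the uniform continuity of $\varphi^c_f$ this makes $\{\varphi^c_g:g\in\cU_0\}$ equicontinuous (after shrinking $\cU_0$). Your distortion estimate then becomes $|\log\ell_n-\log\ell_0-S_n\varphi^c_g(x)|\leq n\,\omega(2\varepsilon_1)$ for a common modulus $\omega$, and choosing $\varepsilon_1$ with $\omega(2\varepsilon_1)\leq r$ yields $|\lambda_c(\mu,g)|\leq r$ for every $g\in\cU_0$, which is what the reduction requires.

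For comparison, the paper does not carry out this computation but quotes \cite[Theorem~3.1]{LVY13}: uniformly over a $C^1$-neighborhood and at a fixed scale $\varepsilon_0$, every ergodic measure with \emph{nonzero} center exponent is almost expansive. Hence any ergodic $\mu$ with $\mu(\mathrm{NE}(\varepsilon_0))>0$ has $\lambda_c(\mu,g)=0\geq -r$, and \eqref{hPP<Pto} finishes. Your argument recovers only the weaker bound $|\lambda_c|\leq r$, which still suffices; the paper's route is shorter because the work is delegated to \cite{LVY13}.
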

\begin{proof}
{In \cite[Theorem 3.1]{LVY13}  it was shown that there exist a $C^1$ neighborhood $\cU$ and $\varepsilon_1>0$ such that $h(g, \Gamma_{\varepsilon_0}(g,x))=0$ for every $x\in M$.
 More specifically,  it was proved that if $\mu\in  \cM_g^e$ does not have  zero Lyapunov exponents,  then for $\mu$-almost every $x\in M$, we have  $\Gamma_{\varepsilon_0}(g,x))=\{x\}$. 
 This implies that the set of ergodic measures which are not almost expansive is contained in the set of ergodic measures with zero central Lyapunov exponent.
 Thus, by equation \eqref{hPP<Pto} we conclude that $P^{\perp}(g,\phi,\varepsilon_0)<P(g,\phi)$ for every $g\in \cU$.}
\end{proof}

\subsection{ The set $\cP_g$ has small pressure}
 Now, to use  Theorem \ref{criterioCT} we need to prove that $\cP_g$ has a small pressure.
This is the content of the next result.

\begin{proposition}\label{P has entroy small}
    There exists $\varepsilon_1>0$  such that  $P(\cP_g,g,\phi,\delta,\varepsilon)<P(g,\phi)$ for every $0<\varepsilon<\varepsilon_1$ and $g\in \cV$, where $\cV$ is as in Corollary \ref{existence of a and V}.
\end{proposition}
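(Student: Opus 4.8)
The goal is to show $P(\cP_g, g, \phi, \delta, \varepsilon) < P(g,\phi)$ for all small $\varepsilon$ and all $g \in \cV$. The strategy is the standard one from Climenhaga--Thompson-type arguments: bound the pressure of the collection $\cP_g$ by the supremum of $P_\mu(g,\phi)$ over invariant measures $\mu$ whose central Lyapunov exponent is $\geq -r$, and then invoke Corollary \ref{existence of a and V}, which says precisely that this supremum is strictly below $a < P(g,\phi)$. So the whole content is to pass from a separated-set estimate at finite scale to a statement about invariant measures.

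The plan is as follows. First I would fix a small $\varepsilon_1>0$ such that $|\varphi^c(y) - \varphi^c(z)| < r/2$ whenever $d(y,z) < \varepsilon_1$; this uses uniform continuity of $\varphi^c$ (and it can be made uniform over $g \in \cV$ by shrinking $\cV$). Then for $0 < \varepsilon < \varepsilon_1$ and each $n$, take a maximal $(n,\delta)$-separated set $E_n \subset (\cP_g)_n$ realizing (up to a factor) the partition sum $\Lambda((\cP_g)_n, g, \phi, \delta, \varepsilon, n)$. Form the empirical measures
\begin{equation*}
\nu_n = \frac{1}{\sum_{x \in E_n} e^{\Phi_\varepsilon(x,n)}} \sum_{x \in E_n} e^{\Phi_\varepsilon(x,n)} \delta_x, \qquad \mu_n = \frac{1}{n}\sum_{k=0}^{n-1} g^k_* \nu_n,
\end{equation*}
and pass to a weak$^*$ limit $\mu$ along a subsequence realizing the $\limsup$ in the definition of $P(\cP_g,g,\phi,\delta,\varepsilon)$. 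The standard half of the variational principle argument (as in Walters, adapted to the fact that we are summing $e^{\Phi_\varepsilon}$ rather than $e^{\Phi_0}$, which only costs an additive $\|\sum_{k<n}\phi\circ g^k\|$-type error that is $o(n)$ once $\varepsilon$ is small and $\phi$ uniformly continuous) gives that $\mu$ is $g$-invariant and
\begin{equation*}
P(\cP_g, g, \phi, \delta, \varepsilon) \leq h_\mu(g) + \int \phi \, d\mu = P_\mu(g,\phi).
\end{equation*}
The key extra point is to control $\lambda_c(\mu,g)$. Since every $x \in E_n \subset (\cP_g)_n$ satisfies $S_n \varphi^c(x) \geq -rn$ by the definition \eqref{def-Pg} of $\cP_g$, and the Birkhoff averages of $\varphi^c$ under $\mu_n$ are, by construction, averages of $\frac{1}{n}S_n\varphi^c$ over the points of $E_n$, we get $\int \varphi^c \, d\mu_n \geq -r$ for every $n$; passing to the limit and using continuity of $\varphi^c$ yields $\lambda_c(\mu,g) = \int \varphi^c \, d\mu \geq -r$. (If the separated sets live in $(\cP_g)_n$ but not every subpoint of the orbit is in $\cP_g$, one uses that $S_n\varphi^c(x) \ge -rn$ is exactly the defining inequality, so no subadditive bookkeeping is needed here — this is cleaner than the corresponding $\cG_g$ side.) Then by Corollary \ref{existence of a and V},
\begin{equation*}
P(\cP_g,g,\phi,\delta,\varepsilon) \leq P_\mu(g,\phi) \leq \sup\{P_\mu(g,\phi): \mu \in \cM_g, \lambda_c(\mu,g) \geq -r\} < a < P(g,\phi),
\end{equation*}
which is the desired strict inequality, uniformly over $g \in \cV$.

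The main obstacle, and the step deserving the most care, is the passage from the scale-$\varepsilon$ partition function $\Lambda(\cdot,\delta,\varepsilon,n)$ (which uses $\Phi_\varepsilon(x,n) = \sup_{y \in B_n(x,\varepsilon)} S_n\phi(y)$, a supremum over Bowen balls) to a clean upper bound by $h_\mu(g) + \int \phi\,d\mu$. One must check that replacing $\Phi_\varepsilon$ by $\Phi_0$ introduces only an error of the form $n \cdot \omega_\phi(\varepsilon)$, where $\omega_\phi$ is the modulus of continuity of $\phi$ — so that after dividing by $n$ and letting $\varepsilon \to 0$ (or just choosing $\varepsilon_1$ small enough) the error is absorbed into the strict gap $P(g,\phi) - a > 0$. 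Since $a$ and $r$ were fixed first in Corollary \ref{existence of a and V} independently of $\varepsilon$, and the gap $a < P(g,\phi)$ is uniform on $\cV$, there is genuine room to absorb this error; one simply takes $\varepsilon_1$ small enough that $\omega_\phi(\varepsilon_1) < P(g,\phi) - a$ uniformly (using upper semicontinuity of the pressure and compactness to get a uniform lower bound on $P(g,\phi) - a$ over $\cV$, shrinking $\cV$ if needed). The rest — invariance of the limit measure, the entropy upper bound — is the routine verbatim half of the variational principle proof and need not be reproduced in detail.
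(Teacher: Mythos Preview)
Your proposal is correct and follows essentially the same approach as the paper: build empirical measures from $(n,\delta)$-separated sets in $(\cP_g)_n$, take a weak$^*$ limit, apply the Walters half of the variational principle to bound $P(\cP_g,g,\phi,\delta,\varepsilon)$ by $P_\mu(g,\phi)$, observe that the defining inequality $S_n\varphi^c(x)\geq -rn$ forces $\lambda_c(\mu,g)\geq -r$, and conclude via Corollary~\ref{existence of a and V}. The only cosmetic difference is that the paper first treats the case $\varepsilon=0$ (weights $e^{\Phi_0}$) and then invokes uniform continuity of $\phi$ to pass to small $\varepsilon>0$, whereas you carry $\Phi_\varepsilon$ through and absorb the $n\cdot\omega_\phi(\varepsilon)$ error into the gap $P(g,\phi)-a$ at the end; these are equivalent. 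Your opening choice of $\varepsilon_1$ via $|\varphi^c(y)-\varphi^c(z)|<r/2$ is never actually used in this argument (it is relevant for the $\cG_g$ side, not here) and can be dropped.
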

\begin{proof}

    Let $E_n\subset (\cP_{g})_{n}$ be a $(n,\delta)-$separated set with $\delta>0$  such that $\Lambda(\mathcal{P}_g,g,\phi,\delta,n)=\sum_{x\in E_n}e^{\Phi_0(x,n)}$.
    Consider 
    $$\nu_n=\frac{1}{\sum_{x\in E_n}e^{\Phi_0(x,n)}}\sum_{x\in E_n}e^{\Phi_0(x,n)}\delta_x,
    \,\, \mbox{and}\,  \,\,\mu_n=\frac{1}{n}\sum_{k=0}^{n-1} g^k_*\nu_k. $$
 
Reasoning as in \cite[Theorem 9.10]{walters2000},  we obtain that  any limit point $\mu$ of $\mu_n$ is $g-$invariant  and  $P(\mathcal{P}_g,g,\phi,\delta)\leq P_\mu(g,\phi)$. 
Since $\cP_g$ is as in (\ref{def-Pg}) we get 
 $\lambda^c(\mu)=\int \varphi^c\geq -r$. 
 Thus, taking $a$ as in (\ref{hPP<Pto}) we obtain
 \begin{equation}\label{eqpressure}
     P(\cP_g,g,\phi,\delta)<a<P(g,\phi).
 \end{equation}  
 
 Since $\phi$ is uniformly continuous  there exists $\varepsilon_1$ such that 

\begin{equation*}
     P(\cP_g,g,\phi,\delta,\varepsilon)<a<P(g,\phi) \text{ for every } \varepsilon<\varepsilon_1 \text{ and } g\in \cV.
\end{equation*}
This finishes the proof.
\end{proof}

\subsection{ Bowen's property}
Here we prove that if $\phi$ is H\"older continuous,  then for
every $g \in \cV$, $\phi$ satisfies the Bowen property on $\mathcal{G}_g$.

\begin{proposition}\label{G has bowen property} Let $\cV$ and $\hat{\varepsilon}>0$ as in Corollary \ref{existence of a and V} and Theorem \ref{Manifolds for hyperbolic times} respectively. If $g \in \cV\cap \Diff^+(M)$ then  any potential  H\"older continuous $\phi:M\to \mathbb{R}$ has the Bowen property at any scale less than $\hat{\varepsilon}$ on $\cG_g$.
\end{proposition}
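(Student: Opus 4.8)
The plan is to exploit the uniform finite-time contraction on center-stable disks provided by Theorem~\ref{Manifolds for hyperbolic times} together with the uniform backward contraction on unstable leaves coming from $\|Dg^{-1}_{|E^u}\|<\xi<1$. The point is that for $(x,n)\in\cG_g$ and $y\in B_n(x,\varepsilon)$ with $\varepsilon<\hat\varepsilon$, one can factor the comparison of the Birkhoff sums $\Phi_0(x,n)$ and $\Phi_0(y,n)$ through a ``corner point'': using item $(iv)$ of Theorem~\ref{Manifolds for hyperbolic times}, there is a point $z$ lying in $N^{cs}_{R_{cs}}(g,x)$ and in $W^u_{loc}(y)$, and one estimates $|\Phi_0(x,n)-\Phi_0(z,n)|$ and $|\Phi_0(z,n)-\Phi_0(y,n)|$ separately.

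First I would bound $\sum_{k=0}^{n-1}|\phi(g^k x)-\phi(g^k z)|$. Since $z\in N^{cs}_{R_{cs}}(g,x)$ and $(x,n)\in\cG_g$, item $(iii)$ of Theorem~\ref{Manifolds for hyperbolic times} gives $d(g^k x,g^k z)\le Ce^{-kr/2}d(x,z)$ for $0\le k\le n$; here $d(x,z)$ is controlled by $\hat\varepsilon$ (after possibly shrinking $\cV$ and $\hat\varepsilon$ relative to $R_{cs}$). Hölder continuity of $\phi$, say with exponent $\alpha$ and constant $|\phi|_\alpha$, then yields $|\phi(g^k x)-\phi(g^k z)|\le |\phi|_\alpha C^\alpha e^{-k\alpha r/2}(2\hat\varepsilon)^\alpha$, and summing the geometric series bounds this contribution by a constant $K_1=K_1(f,\phi,\alpha)$ independent of $n$, $x$ and (for $g\in\cV$) of $g$.

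Next I would bound $\sum_{k=0}^{n-1}|\phi(g^k z)-\phi(g^k y)|$ using backward contraction along the unstable leaf. Since $z\in W^u_{loc}(y)$ and both $z$ and $y$ stay $\varepsilon$-close up to time $n$ (for $y$ this is the hypothesis $y\in B_n(x,\varepsilon)$ plus $d(z,y)$ small, for $z$ it follows from the previous step), the pair $(g^k y, g^k z)$ lies on a short unstable segment for all $0\le k\le n$; applying $\|Dg^{-1}_{|E^u}\|<\xi$ backwards from time $n-1$ gives $d(g^k z,g^k y)\le C'\xi^{\,n-1-k}d(g^{n-1}z,g^{n-1}y)\le C'\xi^{\,n-1-k}\cdot(\text{const}\cdot\varepsilon)$. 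Again Hölder continuity and summation of the geometric series $\sum_k \xi^{\alpha(n-1-k)}$ give a uniform bound $K_2$. Setting $K=K_1+K_2$ finishes the proof, since $|\Phi_0(x,n)-\Phi_0(y,n)|\le K_1+K_2=K$ for all $(x,n)\in\cG_g$ and $y\in B_n(x,\varepsilon)$.

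The main obstacle I anticipate is making the geometric picture at the ``corner'' rigorous and uniform in $g$: one must check that the intersection point $z$ from item $(iv)$ genuinely has both the required distance $d(x,z)\le$ const$\cdot\hat\varepsilon$ (so that the center-stable estimate applies from $k=0$) and that the unstable holonomy distance $d(z,y)$ is comparable to $\varepsilon$, and moreover that the forward orbit segment of $z$ stays in the regime where the unstable backward-contraction estimate is valid (i.e. $g^k z$ and $g^k y$ remain on a controlled local unstable plaque for all $k\le n-1$). This requires choosing $\hat\varepsilon$ small relative to $R_{cs}$, the local unstable plaque size, and the domination/continuity constants, uniformly over the neighborhood $\cV$; once these scales are fixed, the two Hölder-plus-geometric-series estimates are routine.
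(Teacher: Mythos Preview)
Your proposal is correct and follows essentially the same route as the paper: both arguments use the local product structure from item~$(iv)$ of Theorem~\ref{Manifolds for hyperbolic times} to produce the corner point $z=[x,y]\in N^{cs}_{R_{cs}}(g,x)\cap W^u_{loc}(y)$, bound $d(g^kx,g^kz)$ by $Ce^{-kr/2}\cdot\mathrm{const}$ via item~$(iii)$, bound $d(g^kz,g^ky)$ by $\xi^{n-k}\cdot\mathrm{const}$ via backward contraction on $W^u$, and then sum two H\"older-weighted geometric series. The only cosmetic difference is that the paper packages the two estimates into a single bound $d(g^kx,g^ky)\le 2\max\{Ce^{-rk/2}\delta_0,\xi^{n-k}\delta_0\}$ via the triangle inequality before applying H\"older, whereas you split $|\Phi_0(x,n)-\Phi_0(y,n)|$ into two pieces through $z$; the resulting constants differ only by harmless factors.
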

\begin{proof}
Given $g\in \cV \cap \Diff^+(M)$, for every $(x,n)\in \mathcal{G}_g$ consider $N^{cs}(g,x)$ as in  Theorem \ref{Manifolds for hyperbolic times}. Given any $(x,n)\in \mathcal{G}_g$ and $y\in M$ with $d(x,y)<\hat{\varepsilon}$, the intersection of $N^{cs}(g,x)$ with $W^u_{loc}(y)$ is a unique point $\{[x,y]\}$.

\begin{claim}
If $(x, n) \in \cG_g$ and $y \in B_n(x, \varepsilon)$, then
$$
d(g^k(x), g^k(y)) \leq 2 \max\left\{C e^{-rk/2} \delta_0, \xi^{n-k} \delta_0 \right\}, \mbox{ for every } 0\leq k \leq n
$$
where $\xi$ is as defined in Definition \ref{def-partialh}, $C > 0$ is as given in item $(iv)$ of Theorem \ref{Manifolds for hyperbolic times}, and $\delta_0$ is a positive constant that depends on  $\cV$.

\end{claim}
    \begin{proof}  As  $[x,y]\in N^{cs}_{Rc}(x)$, for every $k\in \{0,1,\cdots,n-1\}$ we have

$$d(g^k(x),g^k([x,y])) \leq C{e^{-rk/2}}d(x,[x,y]),$$
since $[x,y]\in W^u(y)$, 
we get
 $$d(g^ky,g^k([x,y])\leq \xi^{n-k}\delta_0,\quad k\in \{0,1,\cdots,n-1\},$$
where $\delta_0> \varepsilon>0$ is such that $d(z,[z,\hat{z}])$ and $d(\hat{z},[z,\hat{z}])$ are both less than $\delta_0$ for every $z\in \cG_g$ and $\hat{z}\in B(x,\hat{\varepsilon})$.

By the triangle inequality, we get
$$d(g^k(x),g^k(y))\leq 2\max\{C{e^{-rk/2}}\delta_0,{\xi^{n-k}}\delta_0\}.$$
\end{proof}

Now, writing $C_0$ for the H\"older constant of $\phi$ and $\gamma$ for the H\"older exponent, we obtain 

$$
\begin{aligned}
\left|\phi\left(g^k x\right)-\phi\left(g^k y\right)\right| & \leq C_0 \cdot d\left(g^k x, g^k y\right)^\gamma\\
& \leq C_0 \cdot (2 \delta_0)^\gamma\max\{C{e^{-\gamma rk/2}},{\xi^{\gamma(n-k)}}\},
\end{aligned}
$$
and summing over $0 \leq k<n$ gives
$$
\begin{aligned}
\left|\Phi_0(x,n)-\Phi_0(y,n)\right| & \leq \sum_{k=0}^{n-1} C_0 \cdot (2 \delta_0)^\gamma \max\{C{e^{-\gamma rk/2}},{\xi^{\gamma(n-k)}}\}, \\
& \leq C_0 \cdot (2 \delta_0)^\gamma \sum_{k=0}^{n-1} C{e^{-\gamma rk/2}}+{\xi^{\gamma(n-k)}}=: K .
\end{aligned}
$$
Since $K$ is finite and independent  of $x,y,n,$ we finish the proof.
\end{proof}

\subsection{ Proof of Theorem \ref{teo-main}}
Let $f:M\to \bR$ and $\phi:M\to \bR$ be as in Theorem \ref{teo-main}. 
Let $\cV\subset  \operatorname{Diff}^{1+}(M)$ and $a>0$ as in Corollary \ref{existence of a and V}. So, every $g\in \cV \cap \Diff^{1+}(M)$ has a decomposition  as defined in Section \ref{choosing descomposition}.

Now, take $\varepsilon$ such that $\varepsilon<\varepsilon_0,\varepsilon_1,\tilde{\varepsilon}$, where $\varepsilon_0$, $\varepsilon_1$ and $\tilde{\varepsilon}$ are as in  propositions \ref{obstrucao-de-expansividade-pequena}, \ref{P has entroy small} and \ref{G has bowen property} respectively. 
Therefore,

\begin{equation}\label{Part 1}
    P(\cP_g,g,\phi,\delta,\varepsilon)<P(g,\phi) \text{ for every } g\in \cV\cap \Diff^{1 +}(M) \text{ and } \delta>0,
\end{equation}
\begin{equation}\label{Part 2}
    P^\perp(g,\phi,\varepsilon)<P(g,\phi)\text{ for every } g\in \cV\cap \Diff^{1 +}(M), 
\end{equation}
and
\begin{equation}\label{part 3}
    \cG_g \text{ has the Bowen's property  in scale } \varepsilon \text{ for every } g\in \cV\cap \Diff^{1 +}(M). 
\end{equation}  

By Proposition \ref{g has specification} there exists $\cU\subset\cV$ such that  every $g\in \cU \cap \Diff^{1 +}(M)$  has specification on $\cG_g$ at scale   $\delta$ with $\delta<2000\varepsilon$. Therefore, by (\ref{Part 1}), (\ref{Part 2}) and (\ref{part 3}) and Theorem \ref{criterioCT} we conclude that $(g,\phi)$  has a unique equilibrium state, finishing the proof.
\vspace{0.2cm}

\section{The Derived from Anosov Example}\label{s-example}

In this section, we show that the classical class of examples of a non-hyperbolic, robustly transitive diffeomorphism, isotopic to an Anosov automorphism and derived from an Anosov system, as constructed by Ma\~n\'e in \cite{M78}, satisfies the hypotheses of our theorem. To do so we proceed as follows.

Let $A$ be a linear Anosov diffeomorphism over the 3-torus $\mathbb{T}^3$ with eigenvalues 
$$0<\lambda_s<1<\lambda_c<\lambda_u.$$ 
A $C^r \, (r\geq 1)$ partially hyperbolic diffeomorphism $f:\bT^3 \to \bT^3$ is called a {\em Derived from Anosov ($DA$, for short) diffeomorphism} if it is in the isotopic class of $A$. 
Let ${\cD}^r(A)$  denote the set of $C^r \, DA$-diffeomorphisms.

By \cite[Theorem 1]{Fra69}, for every $DA$-diffeomorphism $f\in \mathcal{D}^1(A)$, there exists a continuous surjective map $h:\bT^3\to \bT^3$ that semiconjugates $f$ to $A$, meaning $h\circ f=A\circ h$.

\begin{definition}\label{d-coherent}
A partially hyperbolic diffeomorphism $f: M \to M$ is dynamically coherent if 
$E^{cs} := E^c \times E^s$ and $E^{cu} := E^c \times E^u$ are uniquely integrable to invariant
foliations $\cF^{cs}$ and $\cF^{cu}$, respectively the center stable and center unstable foliations.
Particularly $E^c$ is uniquely integrable to the center foliaiton $\cF^c$, which is obtained by
the intersection $\cF^{cs} \cap  \cF^{cu}$.
\end{definition}

%When $M = \bT^3,$ Brin-Burago-Ivanov  shown in
%\cite[Theorem 1.2.]{BBI09} that all partially hyperbolic diffeomorphisms $f : \bT^3 \to \bT^3$ are dynamically coherent.

A foliation $\cF$ is quasi-isometric if there exists a constant $Q > 1$ such that, for any two points $x, y \in M$  lying on the same leaf of $\cF$, the leafwise distance satisfies $d_{\cF}(x,y) \leq  Qd_M(x,y)+Q$, where $d_{\cF}$ is the distance along the leaf, and $d_M$ is the distance on the manifold.

Let $\pi:\bR^3\to \bT^3$ be the natural lift of $\bT^3$ to its universal cover $\bR^3$ and denote by $\tilde{f}$, $\tilde{h}$, $\tilde{\cF^s}$, $\tilde{\cF^u}$, $\tilde{W^s}$, $\tilde{W^u}$ and $\tilde{A}$ the lift of $f$, $h$, $\cF^s$, $\cF^u$ $W^s$, $W^u$ and  $A$ respectively. 

\begin{proposition}
    Suppose $f\in \cD^1(A)$. then f is dynamically coherent, and the Frank's semiconjugacy $h$ maps the strong stable, center stable, center and center unstable leaves into the corresponding leaves of $A$. Moreover, the foliations  $\tilde{\cF}^u$ and $\tilde{\cF}^s$ are quasi isometric and there is a positive $K \in \bR$ such that $\tilde{h}$ is $K$-close to the identity in $\bR^3$.  
\end{proposition}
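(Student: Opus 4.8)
The plan is to assemble the statement from several known results about $DA$-diffeomorphisms on $\mathbb{T}^3$, most of them due to Potrie, Franks, and Brin--Burago--Ivanov-type arguments. I will break the claim into four parts: (a) dynamical coherence; (b) the semiconjugacy $h$ respects the four relevant foliations; (c) quasi-isometry of $\tilde{\cF}^s$ and $\tilde{\cF}^u$ in $\mathbb{R}^3$; (d) $\tilde h$ is uniformly close to the identity.

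\textbf{Closeness of $\tilde h$ to the identity.} I would start with (d) since it is the softest. By Franks' theorem \cite{Fra69}, $h$ is homotopic to the identity, so its lift $\tilde h$ satisfies $\tilde h \circ \tilde f = \tilde A \circ \tilde h$ and $\tilde h - \mathrm{Id}$ is $\mathbb{Z}^3$-periodic, hence bounded: there is $K_0>0$ with $\|\tilde h - \mathrm{Id}\| \le K_0$. One can then upgrade this to a ``$K$-close to identity'' statement by the standard trick: for each $n$, $\tilde h = \tilde A^{-n}\circ \tilde h \circ \tilde f^n$, and since $\tilde f$ is a bounded perturbation of $\tilde A$ at the level of displacements, iterating and using hyperbolicity of $\tilde A$ controls the displacement uniformly; this is exactly the argument that appears in \cite{M78, Fra69}. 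So (d) follows with some explicit $K$ depending only on $\|\tilde f - \tilde A\|_{C^0}$ and the eigenvalues of $A$.

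\textbf{Dynamical coherence and the foliation-mapping property.} For (a) and (b) I would invoke Potrie's work on partially hyperbolic diffeomorphisms isotopic to Anosov on $\mathbb{T}^3$: every $f\in\cD^1(A)$ with the orientation/branching hypotheses is dynamically coherent, and the branching foliations (or genuine foliations, in the coherent case) are sent by $\tilde h$ into the corresponding linear foliations of $\tilde A$. Concretely, since $\tilde h$ is a bounded distance from the identity and $\tilde h \tilde f = \tilde A \tilde h$, any $\tilde f$-invariant set that grows/shrinks at a definite exponential rate must land inside the matching rate-space of $\tilde A$; applying this to strong stable and strong unstable leaves gives that $\tilde h(\tilde W^s(x)) \subset \tilde W^s_A(\tilde h(x))$ and likewise for $\tilde W^u$, and the center(-stable, -unstable) statements follow by taking the appropriate intersections and using unique integrability. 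This is where I expect the main obstacle: one must be careful that the center-stable and center-unstable distributions are genuinely uniquely integrable (not merely ``branching''), which in the $DA$ setting is Potrie's theorem and requires the hypotheses packaged into $\cD^1(A)$; absent those, one only gets branching foliations and the statement has to be phrased accordingly.

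\textbf{Quasi-isometry of $\tilde\cF^s$ and $\tilde\cF^u$.} For (c), the model computation is that the linear foliations $\tilde W^s_A$ and $\tilde W^u_A$ are quasi-isometric in $\mathbb{R}^3$ (they are affine), with some constant $Q_A$. Since $\tilde h$ moves points a bounded distance $K$ and sends leaves of $\tilde\cF^s$ into leaves of $\tilde W^s_A$, one transports the quasi-isometry estimate back: for $x,y$ on the same leaf of $\tilde\cF^s$, $d_{\tilde\cF^s}(x,y)$ is comparable to $d_{\tilde W^s_A}(\tilde h x,\tilde h y) + (\text{bounded})$, which is $\le Q_A d(\tilde h x,\tilde h y)+Q_A \le Q_A(d(x,y)+2K)+Q_A$, giving a quasi-isometry constant $Q = Q(Q_A,K)$; the same for $\tilde\cF^u$. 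The one subtlety is relating the intrinsic leaf distance of $\tilde\cF^s$ to that of its $\tilde h$-image; this uses that $\tilde h$ restricted to a leaf is a proper surjection onto the target linear leaf at bounded distance, so leaf-distances are distorted by at most an additive constant. Finally I would remark that (c) is really only needed for the strong foliations, and if the paper needs quasi-isometry of the center or center-(un)stable foliations a separate argument (again Potrie/Brin--Burago--Ivanov) is invoked, but as stated only $\tilde\cF^u,\tilde\cF^s$ appear.
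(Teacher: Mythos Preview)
The paper does not give a proof of this proposition at all: immediately after the statement it writes ``These results are well known and the proof can be found in \cite{Po15,Fra69}, see too \cite{BBI09,HP14}.'' Your proposal is entirely in the same spirit---you invoke exactly the same circle of references (Franks for the semiconjugacy and its bounded displacement, Potrie for dynamical coherence and leaf-preservation, Brin--Burago--Ivanov for quasi-isometry)---and you go beyond the paper by actually sketching the arguments.

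One substantive remark on your sketch for (c). Your quasi-isometry argument for $\tilde\cF^s,\tilde\cF^u$ uses (b) as input and then tries to transport the linear quasi-isometry back through $\tilde h$. The step ``$d_{\tilde\cF^s}(x,y)$ is comparable to $d_{\tilde W^s_A}(\tilde h x,\tilde h y)+(\text{bounded})$'' is not justified by $\tilde h$ being $C^0$-close to the identity: $\tilde h$ is only continuous on leaves and could a priori distort intrinsic leaf-length badly; the hand-wave ``proper surjection at bounded distance, so leaf-distances are distorted by at most an additive constant'' is essentially assuming what you want to prove (bounded distance in $\mathbb{R}^3$ only translates to bounded distance in leaf-arclength once you already know the leaf is quasi-isometrically embedded). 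In the references the paper cites, the logical order is reversed: quasi-isometry of the strong one-dimensional foliations is established directly, from the fact that their tangent lines lie in a uniform cone about the corresponding eigendirection of $A$ (so leaf-length is comparable to the length of the orthogonal projection onto that line), and this is then used as an ingredient for dynamical coherence and the leaf-correspondence. Since the paper defers entirely to the literature, this is a matter of reading the cited sources in the right order rather than a gap in your overall strategy.
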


  These results are well known and the proof can be found in \cite{Po15,Fra69}, see too \cite{BBI09,HP14} 
  
  %\cite[Corollary7.7]{Po15},
  
  \begin{theorem}\label{for-DA-hs>hu} 
    If $f\in \cD^1(A)$ with $\operatorname{dim}E^c=1$ then $h^u(f)<h^s(f).$
\end{theorem}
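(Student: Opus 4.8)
The statement to prove is that for a DA diffeomorphism $f\in\cD^1(A)$ on $\bT^3$ with one-dimensional center, the unstable entropy is strictly smaller than the stable entropy: $h^u(f)<h^s(f)$. The natural strategy is to compute both quantities via the semiconjugacy $h$ with the linear Anosov $A$ and the variational principle for unstable entropy (\cite[Theorem D]{HHHY2017}), reducing the inequality to a comparison of the expansion/contraction rates of $A$. First I would recall that the unstable topological entropy can be computed as an unstable volume growth rate (the remark after Definition \ref{def-u-s-entropia}, using \cite[Theorem C]{HHHY2017}): $h^u(f)$ equals the exponential growth rate of the volume of $f^n(D)$ for a disk $D$ inside an unstable leaf, and symmetrically $h^s(f)=h^u(f^{-1})$ is the unstable volume growth for $f^{-1}$, i.e. the growth rate of the volume of $f^{-n}(D^s)$ for a disk $D^s$ in a stable leaf.

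**Key steps.** The core of the argument is to transport these growth rates to the linear model. Using the proposition just stated, $h$ maps strong stable leaves of $f$ into stable leaves of $A$, $\tilde h$ is $K$-close to the identity, and the foliations $\tilde\cF^u,\tilde\cF^s$ are quasi-isometric. Quasi-isometry of $\tilde\cF^s$ together with $\|\tilde h-\mathrm{id}\|\le K$ means that leafwise distances along $f$-stable leaves are comparable (up to multiplicative and additive constants) to leafwise distances along the corresponding $A$-stable leaves, which for $A$ are simply Euclidean lines contracted at rate $\lambda_s$. This should give $h^s(f)=-\log\lambda_s=\log\lambda_u^{-1}$... more carefully, $h^s(f)$ equals the volume growth of $f^{-n}$ restricted to the stable leaf, which matches the expansion rate of $A^{-1}$ along the $A$-stable direction, namely $\log(\lambda_s^{-1})$. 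On the unstable side one must be more careful: the unstable bundle $E^u_f$ of the DA map is one-dimensional and, because $f$ is in the isotopy class of $A$ and dynamically coherent with $h$ mapping center-unstable leaves to those of $A$, the $f$-unstable leaves sit inside the $A$-unstable (2-dimensional) leaves and their leafwise length grows at the rate $\log\lambda_u$. Hence $h^u(f)=\log\lambda_u$ and $h^s(f)=\log\lambda_s^{-1}=-\log\lambda_s$. Since $0<\lambda_s<1<\lambda_c<\lambda_u$ and $\det A=\lambda_s\lambda_c\lambda_u=1$ (the Anosov automorphism of $\bT^3$ preserves volume), we get $\lambda_s\lambda_u=\lambda_c^{-1}<1$, so $\lambda_u<\lambda_s^{-1}$, i.e. $\log\lambda_u<-\log\lambda_s$, which is exactly $h^u(f)<h^s(f)$.

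**Main obstacle.** The delicate point is the rigorous identification $h^u(f)=\log\lambda_u$ rather than merely $h^u(f)\le\log\lambda_u$; the upper bound is easy (the unstable leaf of $f$ lies in a $\lambda_u$-expanding sheet of $A$ up to the bounded distortion coming from $\tilde h$ being $K$-close to the identity and quasi-isometry), but the lower bound requires showing that the length of $f^n(D)$ does not grow slower than $\lambda_u^n$. This is where one uses that $h$ maps center-unstable leaves of $f$ onto center-unstable leaves of $A$ surjectively, so the image $h(f^n(D))$ contains a leafwise segment of $A$-length comparable to $\lambda_u^n\,\mathrm{length}(D)$, and then pulling back via $\tilde h$ (which distorts lengths by at most a bounded affine factor) gives the matching lower bound on $\mathrm{length}(f^n(D))$. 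The same surjectivity/quasi-isometry pair of facts handles the stable side. I expect the proof to assemble these estimates and then invoke the determinant-one relation for $A$ to conclude.
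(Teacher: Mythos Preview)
Your strategy diverges from the paper's and, as written, contains a genuine gap. The paper never attempts to compute $h^s(f)$ or to prove the equality $h^u(f)=\log\lambda_u$. It establishes only the \emph{upper} bound $h^u(f)\le\log\lambda_u$ (via quasi-isometry of $\tilde\cF^u$ and $\|\tilde h-\mathrm{id}\|\le K$, exactly as you sketch), and then argues
\[
h^u(f)\le \log\lambda_u = h^u(A) < h_{top}(A)\le h_{top}(f),
\]
where the strict inequality is simply $h_{top}(A)=\log\lambda_u+\log\lambda_c>\log\lambda_u$ (because $\lambda_c>1$), and the last inequality comes from the semiconjugacy. Having shown $h^u(f)<h_{top}(f)$, the one-dimensional center plus Ledrappier--Young (equation~\eqref{u-entropy-metric=entropy-metric}) gives $h_{top}(f)=\max\{h^u(f),h^s(f)\}$, which forces $h^s(f)=h_{top}(f)>h^u(f)$. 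No lower bound on $h^u(f)$, no computation of $h^s(f)$, and no use of $\det A=\pm1$ are needed.

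What you call the ``main obstacle''---the lower bound $h^u(f)\ge\log\lambda_u$---is therefore irrelevant to the theorem, and your proposed proof of it is also incorrect. The proposition you invoke says $h$ sends \emph{center-unstable} leaves of $f$ into center-unstable leaves of $A$; it does \emph{not} say $h$ sends strong unstable leaves into the $\lambda_u$-eigenline. Thus for an $f$-unstable arc $D$, the curve $h(D)$ lies in a two-dimensional $A^{cu}$-plane where $A^n$ expands at rates between $\lambda_c^n$ and $\lambda_u^n$; nothing you have written rules out $h(D)$ lying mostly along the $\lambda_c$-direction, which would give length growth $\sim\lambda_c^n$, not $\lambda_u^n$. ``Surjectivity of $h$ on center-unstable leaves'' does not supply the missing directional information.

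A corrected version of your route \emph{would} work and is a legitimate alternative: keep the easy upper bound $h^u(f)\le\log\lambda_u$, prove directly the lower bound $h^s(f)\ge-\log\lambda_s$ (here the proposition \emph{does} say $h$ maps strong stable leaves into the one-dimensional $\lambda_s$-line, and $\|\tilde h-\mathrm{id}\|\le K$ forces $\tilde h$ to be nonconstant on any sufficiently long stable arc, so leafwise length under $f^{-n}$ grows like $\lambda_s^{-n}$), and then use $|\det A|=1$ to get $\log\lambda_u<-\log\lambda_s$. This trades the paper's appeal to the Ledrappier--Young dichotomy for a direct leafwise computation on the stable side; both are short, but your write-up should drop the unnecessary and unjustified equality $h^u(f)=\log\lambda_u$.
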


\begin{proof}
    We will prove that $h^u(f)\leq \log(\lambda_u)$. To do so, it suffices to show that for any two points $x$ and $y$ on the same unstable manifold $W^u$ it holds

    $$\limsup_{n\to \infty}\frac{1}{n}\log d_W(f^nx,f^ny)\leq \log(\lambda_u).$$

    Let $x,y\in M$ lie on the same unstable manifold $W^u_z$ of the point $z\in M$
and $\tilde{W}^u_z$, $\tilde{f}$, $\tilde{x}$, $\tilde{y}$ denote the lifts of $W^u_z$, $f$, $x$, $y$ respectively. Then we have 

$$d_{W^u_z}(f^nx,f^ny)=d_{\tilde{W}^u_z}(\tilde{f}^n\tilde{x},\tilde{f}^n\tilde{y}).$$

As $\tilde{W}^u_z$ is quasi isometric, there is a positive number $Q$ such that

$$d_{W^u_z}(f^nx,f^ny)\leq Qd_{\bR^3}(\tilde{f}^n\tilde{x},\tilde{f}^n\tilde{y})+Q.$$

Since $|\tilde{h}(w)-w|<K$ for every $w\in \bR^3$ we get

\begin{equation*}d_{W^u_z}(f^nx,f^ny)\leq -2 K Q +Qd_{\bR^3}(\tilde{\pi}\tilde{f}^n\tilde{x},\tilde{\pi}\tilde{f}^n\tilde{y})+Q,
\end{equation*}

as $\tilde{h} \circ \tilde{f}^n=\tilde{A}^n\circ \tilde{h}$, we have

$$d_{W^u_z}(f^nx,f^ny)\leq -2 K Q +Qd_{\bR^3}(\tilde{A}^n\tilde{\pi}\tilde{x},\tilde{A}^n\tilde{\pi}\tilde{y})+Q,$$

therefore,

$$\limsup_{n\to \infty}\frac{1}{n}\log d_{W^u_z}(f^nx,f^ny)\leq \log \lambda_u,$$

implying that $h^u(f) \leq \log \lambda_u = h^u(A)$.

Moreover, since $h^u(f)\leq \log \lambda_u=h^u(A)< h_{top}(A)\leq h_{top}(f)$ and  $\operatorname{dim}(E^c)=1$, we have that $h^s(f)=h_{top}(f)> h^u(f)$.
\end{proof}

Note that Theorem \ref{teo-main} also holds when the conditions  $h^u(f)> h^s(f)$ and minimality of the unstable foliation are replaced by $h^s(f)> h^u(f)$ and minimality of the stable foliation. To obtain this, one simply applies Theorem \ref{teo-main} to $f^{-1}$. 

\begin{theorem}

Let \( A \) be a linear Anosov diffeomorphism of \( \mathbb{T}^3 \) with eigenvalues \( 0 < \lambda_s < 1 < \lambda_c < \lambda_u \). Let \( f \in D^2(A) \) and suppose the set 
\[
B(f) = \{x : |\det(Tf|_{E^{cu}(x)})| \leq \lambda_u \}
\]
has zero leaf volume inside any strong unstable leaf. Then there is a $C^{1}$ neighborhood $\cU$ of $f$  such that every $C^{1+}$ diffeomorphism $g\in \cU$ has a unique measure of maximal entropy. 

\end{theorem}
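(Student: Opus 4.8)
The plan is to verify the hypotheses of Theorem~\ref{teo-main} (in its stable version, noting the remark that the theorem holds with the roles of $u$ and $s$ interchanged) for the potential $\phi \equiv 0$, so that the unique equilibrium state produced is precisely the unique measure of maximal entropy. With $\phi\equiv 0$ we have $\sup\phi-\inf\phi = 0$, so the gap condition required by Theorem~\ref{teo-main} reduces to $h^s(f) - h^u(f) > 0$. Thus the crux is to establish $h^u(f) < h^s(f)$ under the leaf-volume hypothesis on $B(f)$, after which Theorem~\ref{teo-main} (applied to $f^{-1}$, as in the paragraph following Theorem~\ref{for-DA-hs>hu}) produces a $C^1$ neighborhood $\cU$ of $f$ in which every $C^{1+}$ diffeomorphism has a unique MME, provided also that the stable foliation of $f$ is minimal.

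First I would recall that for $f \in D^2(A)$ one has $h^u_{top}(f)$ equal to the unstable volume growth rate (by \cite[Theorem C]{HHHY2017}, mentioned in the excerpt), which is governed by $\limsup_n \tfrac1n \log \int_{W^u_{\mathrm{loc}}} |\det(Tf^n|_{E^{cu}})|\, dvol$ along strong unstable leaves. The hypothesis that $B(f) = \{x : |\det(Tf|_{E^{cu}(x)})| \le \lambda_u\}$ has zero leaf-volume inside every strong unstable leaf means that for leaf-a.e.\ $x$ the Birkhoff-type products $\prod_{k=0}^{n-1} |\det(Tf|_{E^{cu}(f^k x)})|$ are eventually dominated by the "bad" set having measure zero, so that the leaf-volume of $f^n(W^u_{\mathrm{loc}})$ grows no faster than $\lambda_u^n$ up to subexponential factors; hence $h^u_{top}(f) \le \log\lambda_u$. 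This is the quantitative replacement, for general $DA$ maps with a volume condition, for the quasi-isometry argument used in Theorem~\ref{for-DA-hs>hu} for $f \in \cD^1(A)$. Combined with $h^u_{top}(A) = \log\lambda_u < h_{top}(A) \le h_{top}(f)$ (the latter by the semiconjugacy to $A$, or by structural entropy bounds) and $\dim E^c = 1$, the Ledrappier--Young inequality \eqref{L-Yformula} applied to $f^{-1}$ forces $h^s(f) = h_{top}(f) > \log\lambda_u \ge h^u(f)$, exactly as in the last line of the proof of Theorem~\ref{for-DA-hs>hu}.

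It then remains to check minimality of the stable foliation $\cF^s_f$ for $f$ in the isotopy class of $A$. For $DA$ diffeomorphisms on $\mathbb{T}^3$ this is classical: the stable foliation of a $DA$ diffeomorphism (the one-dimensional strong stable foliation, which survives from the Anosov side since $\lambda_s$ is the non-perturbed, genuinely contracting eigenvalue) is minimal, by the semiconjugacy $h$ mapping strong stable leaves of $f$ onto those of $A$ together with the fact that the strong stable foliation of $A$ is minimal (irrational slope on $\mathbb{T}^3$) — one pulls density back through $h$, using that $\tilde h$ is a bounded distance from the identity. With $h^s(f) > h^u(f)$ and $\cF^s_f$ minimal, Theorem~\ref{teo-main} applied to $f^{-1}$ with $\phi \equiv 0$ yields a $C^1$ neighborhood $\cU$ of $f$ such that every $g \in \cU \cap \mathrm{Diff}^{1+}(M)$ has a unique equilibrium state for the zero potential, i.e.\ a unique measure of maximal entropy.

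The main obstacle is the leaf-volume estimate giving $h^u_{top}(f) \le \log\lambda_u$: one must pass from the pointwise hypothesis "$B(f)$ has zero leaf volume" to a uniform (in $x$ and $n$) subexponential control on $\int_{W^u_{\mathrm{loc}}(x)} |\det Tf^n|_{E^{cu}}|\,dvol$. This requires care because zero leaf-volume of $B(f)$ does not by itself give an $L^\infty$ bound — one expects to need a Borel--Cantelli or large-deviations argument along the expanding leaves, or an invocation of the precise volume-growth formula from \cite{HHHY2017}, to conclude that the exponential growth rate is still at most $\log\lambda_u$. All the remaining ingredients (minimality of $\cF^s_f$, the entropy inequalities, and the perturbation machinery) are either classical or already established in the earlier sections.
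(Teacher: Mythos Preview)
Your proposal misidentifies where the hypothesis on $B(f)$ enters, and this creates a genuine gap.

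The entropy inequality $h^s(f) > h^u(f)$ holds for \emph{every} $f \in \cD^1(A)$ with one-dimensional center, with no assumption on $B(f)$: this is exactly Theorem~\ref{for-DA-hs>hu}, proved via the quasi-isometry of the lifted unstable foliation and the bounded distance of $\tilde h$ from the identity. Your attempt to derive $h^u_{top}(f)\le\log\lambda_u$ from the $B(f)$ hypothesis is therefore unnecessary, and it is also technically off: the unstable volume growth of \cite{HHHY2017} involves $\det(Tf^n|_{E^u})$, not $\det(Tf^n|_{E^{cu}})$, so the condition on $B(f)$ does not feed into that integral in the way you describe. You correctly flag this step as the ``main obstacle,'' but in fact no obstacle exists here once you invoke Theorem~\ref{for-DA-hs>hu}.

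The real work is minimality of the strong stable foliation, which you dismiss as classical. It is not: for a general $f\in\cD^2(A)$ the semiconjugacy argument you sketch does not go through, because $h$ is only a semiconjugacy and density of $h(W^s_f(x))=W^s_A(h(x))$ does not by itself force density of $W^s_f(x)$ (the fibers of $h$ are nontrivial center arcs, and one must rule out that a stable leaf accumulates only on a proper saturated set). This is precisely where the hypothesis on $B(f)$ is used in the paper: the condition that $B(f)$ has zero leaf volume in every strong unstable leaf is a $cu$-volume expansion condition, and \cite[Theorem~A]{HUY22} shows that under this condition the strong stable foliation of $f$ is minimal. With that citation in hand, the paper's proof is two lines: Theorem~\ref{for-DA-hs>hu} gives the entropy gap, \cite{HUY22} gives minimality of $\cF^s_f$, and Theorem~\ref{teo-main} applied to $f^{-1}$ with $\phi\equiv 0$ finishes. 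Your proposal has the two ingredients swapped in difficulty and in their dependence on the hypothesis.
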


\begin{proof}
     Theorem \ref{for-DA-hs>hu} implies that $h^s(f)>h^u(f),$ and  \cite[Theorem A]{HUY22} establishes the stable manifold of $f$ is minimal. 
     Thus, by Theorem \ref{teo-main} there is a $C^1$ neighborhood $\cU$ of $f$ such that if $g\in \cU \cap \Diff^{1+}(M) $ then $g$ has a unique measure of maximal entropy.
\end{proof}

{\em{Acknowledgements.}} We are thankful to Jiagang Yang, Fan Yang and Mauricio Poletti for helpful conversations on this work. We also thank the anonymous referees for their suggestions, which substantially improved the text.

\bibliographystyle{alpha}
%\bibliography{bib}

\end{document}